\newcommand\cyr{%
\renewcommand\rmdefault{wncyr}%
\renewcommand\sfdefault{wncyss}%
\renewcommand\encodingdefault{OT2}%
\normalfont
\selectfont}
\DeclareTextFontCommand{\textcyr}{\cyr}
\DeclareFontFamily{OT1}{rsfs}{}
\DeclareFontShape{OT1}{rsfs}{n}{it}{<-> rsfs10}{}
\DeclareMathAlphabet{\mathscr}{OT1}{rsfs}{n}{it}
\numberwithin{equation}{section}
\newtheorem{theorem}{Theorem}[section]
\newtheorem{lem}[theorem]{Lemma}
\newtheorem{cor}[theorem]{Corollary}
\newtheorem{quest}[theorem]{Question}
\newtheorem{prop}[theorem]{Proposition}
\theoremstyle{definition}
\theoremstyle{remark}
\newtheorem{remark}[theorem]{Remark}
\newtheorem{example}[theorem]{Example}
\newcommand{\ds}{\displaystyle}
\newcommand{\tr}{\operatorname{tr}}
\newcommand{\Ext}{\operatorname{Ext}}
\newcommand{\Hom}{\operatorname{Hom}}
\newcommand{\End}{\operatorname{End}}
\newcommand{\depth}{\operatorname{depth}}
\newcommand{\cC}{\mathcal{C}}
\newcommand{\m}{\mathfrak{m}}
\title{Taking trace does not preserve reflexivity}
\address{Harvey Mudd College}
\author[Lindo]{Haydee Lindo}
\email{hlindo@g.hmc.edu}
\author[Maitra]{Sarasij Maitra}
\address{University of Utah (Currently moved to Haverford College)}
\email{smaitra2@haverford.edu}
\author[Zhang]{William Zhang}
\address{University of Utah}
\email{u1397945@utah.edu}
\date{}
\subjclass[2020]{Primary 13C05, 13C13, 13B02, 13B22, 13F30, 13Gxx, 13H10}
\keywords{reflexive, trace ideal, partial trace ideal}
\begin{document}

\maketitle

\begin{abstract}
In this note, we address a question raised in \cite{DMS23} regarding the preservation of reflexivity under taking trace. We answer this question negatively. We also study a few cases where the question has a positive answer in a one dimensional, analytically unramified Cohen-Macaulay local ring.
\end{abstract} 

\section{Introduction}

Over a commutative ring $R$, a module $M$ is called reflexive if the natural map $M\to M^{**}$ is an isomorphism where $M^{**}$ denotes the double dual $\Hom_R(\Hom_R(M,R),R)$. Over a field, all finite dimensional vector spaces are reflexive modules. Similarly, over a general ring, all finite free modules are the simplest examples of reflexive modules. Also, it can be shown that any reflexive module occurs as a second syzygy module \cite[1.4.20]{BH93}, thereby lending some connection to the theory of resolutions as well.  The first formal treatment of reflexive modules seems to be present in \cite{bourbaki1965diviseurs}, though there were some discussions in the works of \cite{dieudonne1958remarks}, \cite{morita1958duality} and \cite{bass84} (where the term ``reflexive" was first used). In fact, over Gorenstein rings, a detailed analysis can be found in \cite{vasconcelos1968reflexive}. Using \cite[Proposition 1.4.1]{BH93}, the study of reflexivity mainly boils down to analyzing the case when $R$ is a one dimensional Cohen-Macaulay local ring and in recent years, various researchers have been actively exploring this avenue; see for instance \cite{kustin2021totally}, \cite{DMS23}, \cite{isobe2024reflexive}, \cite{DL:2024}, \cite{ENDO2024107662}, among other recent sources.

In such studies, the usage of trace ideal of a module is quite common and crucial. The trace ideal of a module is defined to be the sum of all homomorphic images of $M$ inside $R$. So in a way, it helps understand the properties of the module by looking at its \textit{total image} as an ideal in $R$. Many insightful and extremely useful results on trace ideals can be found in various sources such as \cite{lindo2017trace}, \cite{kobayashi2019rings}, \cite{goto2020correspondence}, \cite{lindo2022trace}, \cite{lyle2024annihilators} amongst others. In fact, various necessary and sufficient criteria for an ideal to be the trace ideal of a reflexive module were explored in \cite{lindo2017trace} paving the way for subsequent work.

In this article, our main aim was answering the following question raised in the recent works of Dao et. al.

\begin{quest}\cite[Question 7.17]{DMS23}\label{mainques} Let $R$ be a one dimensional Cohen-Macaulay local ring. If $I$ is a reflexive ideal, is $\tr_R(I)$ also reflexive?
\end{quest}

We settle this question negatively by providing a counterexample, namely we prove the following theorem (\Cref{main:counterexample}).

\begin{theorem}\label{introcounter}
    There exists an analytically unramified one dimensional local domain where trace of a reflexive ideal need not be reflexive.
\end{theorem}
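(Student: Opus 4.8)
The plan is to produce an explicit example, and the natural place to look is among one-dimensional numerical semigroup rings $R=k[[S]]=k[[t^{s}:s\in S]]\subseteq k[[t]]$ for a field $k$ and a numerical semigroup $S\subseteq\N$: such an $R$ is a complete local domain, hence (being a domain, so that its completion $R$ itself is reduced) analytically unramified. The point of this choice is that reflexivity and traces become combinatorial. Every nonzero fractional ideal of $R$ is isomorphic to some $I_E=\sum_{e\in E}k\,t^{e}$ for a relative ideal $E\subseteq\Z$ of $S$ (a subset bounded below with $E+S\subseteq E$), and under this dictionary $\Hom_R(I_E,R)\cong(R:_{K}I_E)$ corresponds to $S-E:=\{z\in\Z : z+E\subseteq S\}$; hence $I_E$ is reflexive if and only if $E=S-(S-E)$, while $\tr_R(I_E)=I_E\cdot(R:_{K}I_E)$ corresponds to $E+(S-E)$, which is a genuine integral ideal (contained in $S$) as soon as $E\subseteq S$. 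So the theorem reduces to exhibiting a numerical semigroup $S$ and a relative ideal $E\subseteq S$ with
\[
E=S-(S-E)\qquad\text{but}\qquad E+(S-E)\subsetneq S-\big(S-(E+(S-E))\big).
\]

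Next I would search for such a pair $(S,E)$ among semigroups of small multiplicity and Frobenius number. The search must be pushed a little: for the simplest reflexive ideals --- $R$, the maximal ideal $\m$ (note $\tr_R(\m)=\m$ because $\depth R>0$), the conductor, the normalization $\overline{R}$ regarded as a fractional ideal, and the canonical ideal --- the trace is readily checked to be reflexive again, so a counterexample can only come from a semigroup with enough room between the conductor and $R$ (and between $R$ and $\overline{R}$) to support a reflexive relative ideal not on that short list. In practice one locates the candidate with a computer algebra system (for example the \texttt{numericalsgps} package, or Macaulay2), which also makes the verification below mechanical.

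With a candidate in hand --- a ring $R$ and a reflexive ideal $I=I_E$ --- the proof is then a finite check: (i) write generators for $I$ and for $(R:_K I)$ and confirm $S-(S-E)=E$, so $I$ is reflexive; (ii) compute $J:=E+(S-E)$, the relative ideal attached to the integral ideal $\tr_R(I)=I(R:_K I)$; (iii) compute $S-J$, then $S-(S-J)$, and point to a single integer lying in $S-(S-J)$ but not in $J$. Since one always has $J\subseteq S-(S-J)$, such an integer shows $\tr_R(I)\subsetneq\tr_R(I)^{**}$, i.e.\ $\tr_R(I)$ is not reflexive, which proves the theorem.

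The main obstacle is locating the pair $(S,E)$ in the first place. No general principle guarantees such an example in low multiplicity or small embedding dimension, and, as noted, the standard reflexive ideals all fail, so one is driven into semigroups of embedding dimension at least four, where the poset of reflexive relative ideals is genuinely richer, and some experimentation is unavoidable. A lesser difficulty is keeping the final verification short enough to be checked by hand rather than as a bulky printout --- which is exactly the role of the combinatorial dictionary above.
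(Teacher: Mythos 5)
Your proposal is a search strategy plus a verification template, not a proof. The theorem is an existence statement, and its entire content is the explicit example; you never produce one. You correctly reduce the problem to finding a numerical semigroup $S$ and a relative ideal $E\subseteq S$ with $E=S-(S-E)$ but $E+(S-E)\subsetneq S-\bigl(S-(E+(S-E))\bigr)$, and your dictionary (duals of monomial ideals via $S-E$, trace via $E+(S-E)$, reflexivity via the double difference) is exactly the valuation bookkeeping the paper uses. But at the decisive moment you write that one ``locates the candidate with a computer algebra system'' and proceeds ``with a candidate in hand''; you even name the location of the example as the main obstacle. That obstacle is the theorem. The paper's proof consists of exhibiting $R=k[[t^7,t^8,t^9,t^{11}]]$ and $I=(t^8,t^9,t^{21})$ (the double dual of $(t^8,t^9)$), computing $\tr_R(I)=(t^7,t^8,t^9)$, and checking via the value sets that $t^{11}$ lies in $\tr_R(I)^{**}$ but not in $\tr_R(I)$; the ambient ring is chosen with $\ell(R/\cC)=5$ and without minimal multiplicity, which the authors show is the smallest setting where a counterexample can occur. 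Without some such explicit pair $(S,E)$ and the finite check carried out, your argument establishes nothing beyond a reformulation of the question.

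A secondary inaccuracy: the claim that every nonzero fractional ideal of $R=k[[S]]$ is isomorphic to a monomial ideal $I_E$ is false in general --- isomorphism classes of ideals of a semigroup ring are not classified by their value sets (already in small examples there are positive-dimensional families of ideals with the same value set). This does not damage your plan, since a counterexample need only be found among monomial ideals, where your formulas for $\Hom_R(I_E,R)$, reflexivity, and $\tr_R(I_E)$ are correct; but the blanket classification statement should be dropped. The fix for the real gap is simply to supply and verify a concrete example, e.g.\ the one above, at which point your steps (i)--(iii) become the same finite valuation computation the paper performs.
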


In the course of this investigation we realized that \Cref{mainques} can have affirmative answers based on the co-length, $\ell(R/\cC)$, where $\cC$ is the conductor ideal of the integral closure $\overline{R}$ in $R$. Firstly, if this colength is small enough, then any regular trace ideal is reflexive (see \Cref{mainthm:smallcolengthrefimpliesrefftrace}).

\begin{theorem}\label{intromain1}
    Let $R$ be a one dimensional analytically unramified non-regular Cohen-Macaulay local ring with infinite residue field. Let $J$ be a proper regular trace ideal of $R$. Then the following statements hold. 
 
 \begin{enumerate}
     \item If $\ell(R/\cC)\leq 3$, then $J$ is reflexive.

     \item If $\ell(R/\cC)=4$ and $R$ has minimal multiplicity, then $J$ is reflexive.
 \end{enumerate}
\end{theorem}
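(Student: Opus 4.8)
The plan is to recast the reflexivity of $J$ in terms of the endomorphism ring $B:=\End_R(J)$ and the finitely many rings between $R$ and $\overline{R}$, and then to argue that a small colength $\ell(R/\cC)$ leaves no room for a non-reflexive trace ideal. First I would reduce: for a proper regular trace ideal $J$, set $B:=\End_R(J)=(J:_R J)$. Standard facts for trace ideals give that $B$ is a ring with $R\subsetneq B\subseteq\overline{R}$ (strict because $J\subseteq\m$ forces $B\supseteq\m^{-1}\supsetneq R$, using that $R$ is non-regular), that $J^{-1}=B$, that $J$ is an ideal of $B$ contained in $R$, and that $J$ is reflexive exactly when $J=(R:_R B)$, the largest $B$-ideal contained in $R$. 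Since $J\subseteq(R:_R B)$ is automatic, the theorem becomes: $(R:_R B)\subseteq J$.

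Next I would reformulate this as a uniqueness statement. Put $\mathfrak{d}:=(R:_R B)=B^{-1}$; then $\mathfrak{d}$ is a reflexive trace ideal, $B=\mathfrak{d}^{-1}$, and the trace ideals $J'$ with $\End_R(J')=B$ are exactly the $B$-ideals $J'\subseteq\mathfrak{d}$ with $(J')^{-1}=B$, for which $(J')^{**}=B^{-1}=\mathfrak{d}$, so $J'$ is reflexive iff $J'=\mathfrak{d}$. Thus it suffices to prove: for the ring $B$ in question there is no $B$-ideal $J'$ with $J'\subsetneq\mathfrak{d}$ and $(R:_R J')=B$; equivalently, $\mathfrak{d}$ is the unique trace ideal with endomorphism ring $B$.

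To use the hypothesis, note $\cC=(R:_R\overline{R})\subseteq B^{-1}=\mathfrak{d}$, so $\cC\subseteq\mathfrak{d}\subseteq R$ and $\ell(R/\mathfrak{d})\le\ell(R/\cC)\le 3$. Since $B=\mathfrak{d}^{-1}$ is recovered from $\mathfrak{d}$, the rings $B$ that can occur correspond to a subset of the (few) ideals $\mathfrak{d}$ with $\cC\subseteq\mathfrak{d}\subseteq R$; combined with $\ell(\overline{R}/B)\le\ell(\overline{R}/R)$ this yields a short explicit list for $B$, $\mathfrak{d}$, $\cC$ and the chain of $B$-ideals between $\cC$ and $\mathfrak{d}$. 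For each entry one checks directly that every $B$-ideal $J'$ strictly inside $\mathfrak{d}$ has $(R:_R J')\supsetneq B$, which disqualifies $J'$ as a trace ideal with endomorphism ring $B$; writing $\overline{R}=V_1\times\cdots\times V_s$ as a product of DVRs and comparing value vectors makes each such check a finite valuation computation, the case $B=\overline{R}$ (where $\mathfrak{d}=\cC$ and $J'$ is an $\overline{R}$-ideal) being the cleanest, as $\ell(R/\cC)\le 3$ forces the relevant valuation gaps small enough that $(J')^{-1}=\overline{R}$ already implies $J'=\cC$.

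When $\ell(R/\cC)=4$ the list above is in general too long — this is precisely the regime of the counterexample of \Cref{main:counterexample} — and one needs an extra hypothesis to prune it; minimal multiplicity does so, since it forces $\End_R(\m)=\m^{-1}$ to be maximal relative to the embedding dimension (under minimal multiplicity $\ell(\m^{-1}/R)=\type R=\embdim R-1$), and combined with $\ell(R/\cC)=4$ this bounds $\embdim R$, hence the possible $B$, sharply enough to finish by the same check. The main obstacle is this last part: squeezing out of the single invariant $\ell(R/\cC)$ enough control on the lattice of rings between $R$ and $\overline{R}$ and on their conductor ideals to make the finite case analysis feasible, and running it sharply — so that it succeeds unconditionally at $\ell(R/\cC)\le 3$, succeeds at $\ell(R/\cC)=4$ only under minimal multiplicity, and remains consistent with the failure in \Cref{main:counterexample}.
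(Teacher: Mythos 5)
There is a genuine gap. Your opening reductions are fine and in fact reproduce the dictionary the paper records later (cf.\ \Cref{cor:dualtrace}): for a regular trace ideal $J$ one has $R:J=J:J=:B\subseteq\overline R$, hence $J^{**}=R:B=\mathfrak d$, and $J$ is reflexive exactly when $J=\mathfrak d$. But everything after this reduction --- which is where the theorem actually lives --- is asserted rather than proved. There is no ``short explicit list'' of possible pairs $(B,\mathfrak d)$: the hypothesis $\ell(R/\cC)\le 3$ bounds lengths of chains of ideals between $\cC$ and $R$, but it neither enumerates those ideals nor controls $\ell(\overline R/R)$ or the lattice of rings between $R$ and $\overline R$ (already with $\cC=\m$, e.g.\ $R=k[[t^e,t^{e+1},\dots,t^{2e-1}]]$, one has $\ell(\overline R/R)=e-1$ arbitrarily large). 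The claim that minimal multiplicity together with $\ell(R/\cC)=4$ ``bounds $\embdim R$, hence the possible $B$'' is false: $R=k[[t^e,t^{4e+1},t^{4e+2},\dots,t^{5e-1}]]$ has minimal multiplicity, $\ell(R/\cC)=4$, and embedding dimension $e$ for every $e\ge 2$. And the decisive step --- that for each admissible $B$ every $B$-ideal $J'\subsetneq\mathfrak d$ has $R:_RJ'\supsetneq B$ --- is precisely the statement to be proved; no argument is given for it, and since the rings $R$ in question form an infinite class, writing $\overline R$ as a product of discrete valuation rings does not reduce it to a finite valuation computation. You yourself flag this as ``the main obstacle,'' and it is not overcome.

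For comparison, the paper never classifies endomorphism rings. Choosing a minimal reduction $xR$ of $J$ it builds the chain $\cC\subseteq xR:_RJ\subseteq\tr_R(J)=J\subseteq\overline J\subseteq\m\subsetneq R$ (using $\cC\subseteq J$, $J\cC=x\cC$, and $xR:_RJ\subseteq\tr_R(J)$), and the colength hypothesis forces one of the inclusions to be an equality; each equality is then dispatched directly: $xR:_RJ=J$ gives $J\cong J^*$ reflexive by \Cref{prelimcor:colonref}; $J=\m$ and $J=\cC$ are reflexive; $xR:_RJ=\cC$ forces $\cC=xR:_R\overline J\cong\overline J^*$, and taking traces together with \Cref{maincor:intclosref} gives $J=\overline J=\cC$; and in case (2) the one remaining possibility $\overline J=\m$ is resolved by minimal multiplicity via \Cref{mainprop:intclosmax}. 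If you wish to keep your endomorphism-ring formulation, you still need a length-count of this kind showing that $\ell(\mathfrak d/J)>0$ is incompatible with the hypotheses; the dictionary $J\mapsto(B,\mathfrak d)$ alone does not supply it.
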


Secondly, using the theory of partial trace ideals developed in \cite{Maitra:2022,Maitra:2024}, we were able to push the study further under some more constraints on $R$ (see \Cref{mainthm:trrefalways}).

\begin{theorem}\label{intromain2}
Let $R$ be an analytically unramified one dimensional non-regular local ring with infinite residue field such that $\overline{R}$ is a DVR . Let $I$ be a reflexive regular ideal of $R$. Then $\tr_R(I)$ is reflexive if any one of the following conditions holds.
    \begin{enumerate}
    \item $\ell(R/\cC)=4$, 
    \item $\ell(R/\cC)=5$ and $R$ is of minimal multiplicity.
    \end{enumerate} 
\end{theorem}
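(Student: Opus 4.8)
Fix an analytically unramified one dimensional non-regular local ring $(R,\m)$ with infinite residue field, with $\overline{R}$ a DVR (so $\overline{R}$ has a single maximal ideal and the conductor $\cC$ is a common ideal of $R$ and $\overline{R}$). Let $I$ be a reflexive regular ideal; we must show $\tr_R(I)$ is reflexive under either colength hypothesis. The plan is to reduce everything to a statement about ideals sitting between $\cC$ and $R$, and then to run a finite case analysis driven by $\ell(R/\cC)\in\{4,5\}$. The first step is to recall the standard dictionary: for a one dimensional analytically unramified CM local ring, reflexivity of a regular ideal $J$ is equivalent to $J = (J^{-1})^{-1}$, i.e.\ $J$ is \emph{divisorial} (equivalently $\m$-primary divisorial, using \cite[Proposition 1.4.1]{BH93} type arguments already invoked in the introduction), and trace ideals $J$ satisfy $J = JJ^{-1}$ by the trace characterization from \cite{lindo2017trace}. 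So I want to understand which divisorial ideals arise as $\tr_R(I)$ for $I$ reflexive.

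**Partial trace ideals and the conductor.** The key tool is the theory of partial trace ideals from \cite{Maitra:2022,Maitra:2024}: since $\overline{R}$ is a DVR, every regular ideal of $R$ contains a power of $\cC$ up to a unit, and one can compute $\tr_R(I)$ by intersecting $I$ with successive "layers" of $\overline{R}$-submodules between $\cC$ and $R$. Concretely, I would first reduce to the case where $I$ is an ideal with $\cC \subseteq I$ (replacing $I$ by an isomorphic copy, which does not change the trace), and then note that $\tr_R(I)$ is determined by the position of $I$ in the finite lattice of $R$-modules between $\cC$ and $\overline{R}$, a lattice whose total length is $\ell(\overline{R}/\cC) + \ell(R/\cC) \le \ell(R/\cC)+ \text{(something bounded by the multiplicity)}$. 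Under minimal multiplicity (case (2) with $\ell(R/\cC)=5$) this lattice is small and rigid; in case (1) with $\ell(R/\cC)=4$ it is even smaller. For each such $R$ one classifies the reflexive regular ideals $I$ (using the divisorial criterion), computes $\tr_R(I) = II^{-1}$, and checks directly that the resulting ideal is again divisorial, i.e.\ satisfies $(\tr_R(I)^{-1})^{-1} = \tr_R(I)$.

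**The case analysis.** I would organize the proof around the possible values of $\ell(R/\cC)$ and the multiplicity $e(R)$. When $\cC$ has small colength, $R$ is close to $\overline{R}$, and one can enumerate the finitely many local rings $R$ with $\overline R$ a DVR and $\ell(R/\cC)$ equal to $4$ (resp.\ $5$ with minimal multiplicity): these are essentially numerical-semigroup rings like $k[[t^a, t^b, \dots]]$ together with a few non-monomial deformations, handled uniformly by the partial-trace machinery rather than ring-by-ring. For each, I list the reflexive regular ideals up to isomorphism, compute their traces, and verify reflexivity of the trace. The expected mechanism is that in these low-colength situations $\tr_R(I)$ always turns out to be either $R$ itself, or the maximal ideal $\m$, or the conductor $\cC$ (or an ideal isomorphic to one of these), and each of these is visibly reflexive — $\cC$ because it is the conductor of a DVR overring hence divisorial, $\m$ because $R$ is not regular so $\m$ is a trace ideal and one checks $\m^{-1}\m = \m$ is divisorial here, and $R$ trivially. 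The content of \Cref{intromain1} already gives us that regular trace ideals are reflexive when $\ell(R/\cC)\le 3$ (and when it equals $4$ under minimal multiplicity), so part (1) here really only needs to control the genuinely new colength-$4$ phenomena without the minimal-multiplicity hypothesis, and part (2) the colength-$5$ minimal-multiplicity case; in both, the hypothesis "$\overline R$ a DVR" collapses the branching that made \Cref{mainques} fail in general.

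**Main obstacle.** The hard part will be showing that no "bad" reflexive ideal appears — that is, ruling out a reflexive regular $I$ whose trace $II^{-1}$ fails the divisorial test. This is exactly where the counterexample of \Cref{introcounter} lives (with $\overline R$ \emph{not} a DVR, or with larger colength), so the argument must genuinely use both the DVR hypothesis and the colength bound to bound the combinatorics of the partial-trace lattice and force every trace into the short reflexive list above. Making the enumeration of rings $R$ truly exhaustive (rather than just listing semigroup rings) and bookkeeping the non-monomial cases is the delicate point; I would lean on the structure theory in \cite{Maitra:2024} to argue that, up to analytic isomorphism preserving traces and reflexivity, only finitely many cases occur and each has been checked.
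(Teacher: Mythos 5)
Your proposal is a strategy outline rather than a proof, and its load-bearing step is never carried out and is not justified. You reduce to a ``finite case analysis'' by claiming that one can enumerate, up to an equivalence preserving traces and reflexivity, all local rings $R$ with $\overline{R}$ a DVR and $\ell(R/\cC)=4$ (resp.\ $5$ with minimal multiplicity), classify their reflexive regular ideals, and check each trace. No such enumeration is given, and the finiteness claim itself is unsubstantiated: rings between $\cC$ and $k[[t]]$ with small colength are not determined by their value semigroups, non-monomial deformations can occur in families, and you would at minimum have to prove that reflexivity of $\tr_R(I)$ depends only on combinatorial data for which finitely many possibilities exist --- a nontrivial assertion you merely assert (``each has been checked''). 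Moreover, your ``expected mechanism'' --- that $\tr_R(I)$ always lands in the list $R$, $\m$, $\cC$ up to isomorphism --- is not established and is not what the argument actually needs: in the genuine proof one of the possible outcomes is $\tr_R(I)\cong I^*$, which is reflexive because it is a dual (\Cref{prelimprop:refprop}(1)), not because it is isomorphic to $R$, $\m$ or $\cC$. So the ``main obstacle'' you flag (ruling out a bad reflexive $I$) is exactly the content of the theorem, and your proposal leaves it open.

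For comparison, the paper's proof avoids any classification. One replaces $I$ by a partial trace ideal of itself, uses \Cref{prelimprop:partialtr}(1) together with reflexivity of $I$ to get $\cC=R:\overline R\subseteq R:(R:I)=I$, and then invokes \Cref{mainlem:chain} to produce the chain
\begin{equation*}
\cC\subseteq xR:_R\overline I\subseteq xR:_R I\subseteq \tr_R(I)\subseteq \overline{\tr_R(I)}=\overline I\subseteq\m .
\end{equation*}
The colength hypothesis ($\ell(\m/\cC)=3$ in case (1), $=4$ in case (2)) forces at least one inclusion to be an equality, and each equality case yields reflexivity of $L=\tr_R(I)$ directly: $\cC=xR:_R\overline I$ forces $L=\cC$; $xR:_R\overline I=xR:_R I$ forces $I=\overline I$ and hence $L=\overline I$; $xR:_R I=L$ gives $L\cong I^*$; $L=\overline L$ is handled by \Cref{maincor:intclosref}; and $\overline L=\m$ uses minimal multiplicity via \Cref{mainprop:intclosmax}. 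If you want to salvage your approach you would have to supply the exhaustive classification and the case-by-case verification, which is considerably more work than this chain-length argument and is precisely the part missing from your write-up.
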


In fact, our counterexample to \Cref{mainques} occurs when we remove the minimal multiplicity assumption in \Cref{intromain2}, thereby making it a minimal counterexample in this context. 

The article is structured as follows. In \Cref{prelim}, we collect the various background results that we use throughout the paper. In \Cref{mainresults}, we prove both \Cref{intromain1} and \Cref{intromain2}. In the course of proving these, we recollect and prove various useful results concerning the relationship between trace, integral closure of an ideal and reflexivity (see \Cref{mainprop:intclosref}, \Cref{mainprop:intclosmax}). This naturally leads to \Cref{counterexample} where we prove \Cref{introcounter} thereby settling \Cref{mainques} negatively. Finally, in \Cref{misc}, we provide a collection of statements that provide further insights into double duals of trace ideals of finitely generated modules (see \Cref{misc:doubledualtrace}) and their connections with birational extensions in the form of endomorphism rings and their centers. This discussion is in the spirit of \cite{lindo2017trace}, \cite{goto2020correspondence} and \cite{ENDO2024107662}. 

\section*{Acknowledgements}
During the course of this project, S. Maitra was partially supported by Project No. 51006801 - American Mathematical Society-Simons Travel Grant. W. Zhang was supported by an REU grant from the University of Utah as well as by National Science Foundation Grant No. 1840190.

\section{Preliminaries}\label{prelim}
Throughout this article $(R,\m, k)$ will be a one dimensional commutative local Noetherian ring, $R$, with unique maximal ideal $\m$ and residue field $k$, which we assume to be infinite. A regular ideal will refer to an ideal $I$ containing a non-zero-divisor. The total ring of fractions of $R$ will be denoted by $Q$ and let $\overline{R}$ be the integral closure of $R$ in $Q$. All modules considered will be finitely generated (left) $R$-modules. For an $R$-module $M$, we use $\ell(M)$, respectively $\mu(M)$, to denote the length of the module, respectively minimal number of generators of the module.  By a fractional ideal, we mean an $R$-submodule of $Q$. If $I, J$ are fractional ideals of $R$, then \[I:J:=\{\alpha \in Q\mid \alpha J \subseteq I\}.\] We use $I:_R J$ to denote the case where we restrict $\alpha$ to be in $R$. This is called a colon ideal of $R$. 

A module $M$ is said to be maximal Cohen-Macaulay over $R$ if the depth of $M$ and the (Krull) dimension of $R$ are equal, where \[\depth(M):=\min\{i\mid \Ext^{i}_R(k,M)\neq 0\}.\] The ring $R$ is said to be Cohen-Macaulay if $R$ is a maximal Cohen-Macaulay module over itself. 
We refer the interested reader to \cite[Chap 1, 2]{BH93} for further details regarding the notions of depth and the Cohen-Macaulay property. We shall always assume that $\mu(\m)\geq 2$, that is, $R$ is not a regular local ring.

Let $\widehat{R}$ be the $\m$-adic completion of $R$. The ring $R$ is said to be analytically unramified if $\widehat{R}$ is a reduced ring, which implies $R$ is also reduced. In this case, $\overline{R}$ is a finitely generated module over $R$ \cite[Corollary 4.6.2]{HS06}.

Below we collect the necessary background tools that will be used in the subsequent discussion.

\subsection{Integral closure of Ideals and reductions}\label{prelimsec:intclos}
Given an ideal $I$, the integral closure of $I$, denoted $\overline{I}$, is defined to be the collection of all elements $r\in R$ such that $r$ satisfies a monic polynomial \[f(x)=\sum_{i=0}^n a_ix^{n-i},\] where $a_i\in I^i, i\geq 1$. The integral closure of $I$,  $\overline{I}$, is an ideal of $R$ and with $I\subseteq \overline{I}$. If $I=\overline{I}$, $I$ is said to be integrally closed. An element $x\in R$ (or the ideal $(x)$ generated by such an element)  is called a principal (alternatively, minimal) reduction of $I$ if $xI^n =I^{n+1}$ for some $n$. Under the assumption that the residue field $k$ is infinite,  every nonzero regular ideal has a minimal reduction generated by a regular element \cite[Proposition 8.3.7, Corollary 8.3.9]{HS06} (c.f. \cite[Sec 2]{DGH2001}). It is helpful to use minimal reductions as $\overline{xR}=\overline{I}$ (for a more general statement, see \cite[Corollary 1.2.5]{HS06}).

\subsection{Conductor Ideal}
The conductor ideal of $\overline{R}$ in $R$ is defined to be $\cC:=R:\overline{R}$. This is the largest ideal shared by both $R$ and $\overline{R}$, i.e., $\cC\overline{R}=\cC$ and any ideal $I$ such that $I\overline{R}=I$ necessarily means that $I\subseteq \cC$ (see, for instance \cite[Exercise 2.11]{HS06}). More generally, if $S$ is a \textit{birational extension} of $R$, i.e., a ring extension $R\subseteq S\subseteq Q$, then we define $\ds \cC_R(S):= R:S$ and call it the conductor of $S$ in $R$. Notice that $\cC_R(S)$ is regular if and only if $S$ is a finitely generated $R$-module: let $0\neq \alpha \in \cC_R(S)$. This implies that $  \alpha S \subseteq R\implies S\subseteq \frac{1}{\alpha}R$. Hence, $S$ must be a submodule of the finitely generated module $\frac{1}{\alpha} R$, thus must itself be finitely generated. Conversely, if $S$ is finitely generated as an $R$-module by the elements $\alpha_i\in Q$, then the least common multiple of the denominators of $\alpha_i$'s give rise to a regular element in $\cC_S(R)$. 

In particular, we will be interested in the case when $\overline{R}$ is finitely generated as an $R$-module (this is guaranteed, for instance when $R=\widehat{R}$ or $R$ is analytically unramified, as discussed above).

\subsection{Reflexive Modules and Ideals}
For any $R$-module $M$, there exists a natural $R$-linear map $\ds \Phi: M \longrightarrow \Hom_R(\Hom_R(M,R),R), 
~m \mapsto \phi_m$, where $\phi_m(f)=f(m)$ for any $f\in \Hom_R(M,R)$. The module $M$ is said to be \textit{reflexive} if $\Phi$ is an isomorphism of $R$-modules. It is standard practice in the literature to write $\Hom_R(M,R)$ as $M^*$ and we follow this convention in the rest of this paper.

We collect some important well-known observations in the following lemma which will be useful in the subsequent discussion. 

\begin{prop}\label{prelimprop:refprop}
Let $R$ be a one dimensional analytically unramified local Cohen-Macaulay ring. Let $M$ be a finitely generated $R$-module and let $I$ be fractional ideal of $R$ containing a regular element $x$ of $R$. Then the following statements hold.

\begin{enumerate}
    \item \cite[Proposition 2.2]{F19} The module $M^*$ is reflexive.

    \item \cite[Proposition 2.4(1)]{kobayashi2019rings}\cite[Lemma 2.4.3]{HS06} Let $J$ be a fractinal ideal of $R$. The module $\Hom_R(I,J)$ is isomorphic to $J:I$. Further, both the above modules are isomorphic to $\frac{1}{x}(xJ:_R I)$.
    
    \item \cite[Proposition 2.4(4)]{kobayashi2019rings} The fractional ideal $I$ is reflexive if and only if $\ds I=R:(R:I)$.

    \item \cite[Corollary 3.2]{DMS23} The ideals $\cC$ and $\m$ are reflexive.
    \end{enumerate}
\end{prop}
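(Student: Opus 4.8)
The plan is to prove the four parts roughly in order, noting that only (1) carries real homological weight; (2) and (3) are formal, and (4) then follows from (1) and (2) together with one short depth computation.

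For (1), I would dualize a finite presentation $R^{a}\to R^{b}\to M\to 0$ to obtain $0\to M^{*}\to R^{b}$, so that $M^{*}$ is torsionless and the natural map $\Phi_{M^{*}}\colon M^{*}\to M^{***}$ is injective. The identity $(\Phi_{M})^{*}\circ\Phi_{M^{*}}=\id_{M^{*}}$ (a direct check) makes $\Phi_{M^{*}}$ a split monomorphism, so $M^{***}\cong M^{*}\oplus\ker\big((\Phi_{M})^{*}\big)$, and applying $\Hom_{R}(-,R)$ to $M\to M^{**}\to\coker\Phi_{M}\to 0$ identifies $\ker\big((\Phi_{M})^{*}\big)$ with $(\coker\Phi_{M})^{*}$. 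It then remains to see $\coker\Phi_{M}$ is a torsion module: at a minimal prime $\mathfrak p$ the ring $R_{\mathfrak p}$ is a field (here $R$ is reduced, being analytically unramified), so $\Phi_{M_{\mathfrak p}}$ is an isomorphism and $\Supp(\coker\Phi_{M})=\{\mathfrak m\}$; since $R$ is Cohen--Macaulay, $\Ass(R)$ is the set of minimal primes, so $\Ann(\coker\Phi_{M})$ contains a nonzerodivisor. A torsion module has no nonzero homomorphism to the torsion-free module $R$, hence $(\coker\Phi_{M})^{*}=0$ and $\Phi_{M^{*}}$ is an isomorphism.

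For (2) and (3): since $I$ contains a regular element of $R$ one has $Q\otimes_{R}I\cong Q$, so the adjunction along $R\to Q$ yields $\Hom_{R}(I,Q)\cong\Hom_{Q}(Q\otimes_{R}I,Q)\cong\Hom_{Q}(Q,Q)=Q$; that is, every $R$-linear map $I\to Q$ is multiplication by a unique element of $Q$. Restricting to the maps whose image lies in $J$ identifies $\Hom_{R}(I,J)$ with $\{q\in Q:qI\subseteq J\}=J:I$, and since $qI\subseteq J\iff qxI\subseteq xJ$, clearing the denominator $x$ rewrites the latter as $\tfrac1x(xJ:_{R}I)$. Part (3) is then the case $J=R$ applied twice: $R:I\cong\Hom_{R}(I,R)=I^{*}$ and $R:(R:I)\cong\Hom_{R}(I^{*},R)=I^{**}$, and a diagram chase through these identifications shows the canonical map $\Phi_{I}$ becomes the inclusion $I\hookrightarrow R:(R:I)$, so $\Phi_{I}$ is bijective exactly when $I=R:(R:I)$.

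For (4): since $R$ is analytically unramified, $\overline{R}$ is a finite $R$-module containing the regular element $1$, so (2) with $I=\overline{R}$ and $J=R$ gives $\cC=R:\overline{R}\cong\Hom_{R}(\overline{R},R)=\overline{R}^{*}$, which is reflexive by (1). For $\mathfrak m$ (a regular ideal, as $\depth R=1$): from $R\subseteq R:\mathfrak m$ we get $R:(R:\mathfrak m)\subseteq\Hom_{R}(R,R)=R$, while $\mathfrak m\subseteq R:(R:\mathfrak m)$ always, so this ideal equals $\mathfrak m$ or $R$; it equals $R$ only if $R:\mathfrak m=R$, which is impossible since applying $\Hom_{R}(-,R)$ to $0\to\mathfrak m\to R\to k\to 0$ and using $\Hom_{R}(k,R)=0$ gives $(R:\mathfrak m)/R\cong\Ext^{1}_{R}(k,R)$, which is nonzero because $\grade(\mathfrak m,R)=\depth R=1$. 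Hence $R:(R:\mathfrak m)=\mathfrak m$ and $\mathfrak m$ is reflexive by (3). The step I expect to be the main obstacle is verifying that $\coker\Phi_{M}$ is genuinely torsion in (1): that is the only place where ``reduced'' and ``one-dimensional Cohen--Macaulay'' are really used, the remaining arguments being formal manipulations with fractional and colon ideals.
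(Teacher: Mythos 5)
The paper offers no argument for this proposition at all---it is a compendium of results quoted from the cited sources---so your write-up is by construction a different route: a self-contained proof. Your arguments for (1), (3) and (4) are correct and are essentially the standard proofs behind those citations. In (1), the splitting identity $(\Phi_M)^*\circ\Phi_{M^*}=\id_{M^*}$, the identification $\ker\bigl((\Phi_M)^*\bigr)\cong(\coker\Phi_M)^*$, and the verification that $\coker\Phi_M$ is torsion (reducedness makes $R_\p$ a field at minimal primes; one-dimensional Cohen--Macaulayness gives $\Ass(R)=\operatorname{Min}(R)$, so the annihilator contains a nonzerodivisor) are exactly what is needed, and the conclusion $(\coker\Phi_M)^*=0$ finishes it. In (3), translating $\Phi_I$ into the inclusion $I\subseteq R:(R:I)$ is right; the only point you should make explicit is that $(2)$ may be applied to $R:I$ because a common denominator of a finite generating set of $I$ is a nonzerodivisor of $R$ lying in $R:I$. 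In (4), both the identification $\cC=R:\overline{R}\cong\overline{R}^*$ (which uses finiteness of $\overline{R}$, i.e.\ analytic unramifiedness) and the argument $(R:\m)/R\cong\Ext^1_R(k,R)\neq0$ are correct.

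The one step that does not go through as written is the final identification in (2). From $qI\subseteq J\iff qxI\subseteq xJ$ you may only conclude $J:I=\frac1x\,(xJ:I)$ with the colon computed inside $Q$; to land in $xJ:_R I$ you additionally need $qx\in R$, i.e.\ $x(J:I)\subseteq R$. This is automatic when $J\subseteq R$, since then $qx\in qI\subseteq J\subseteq R$, and that is the only case the paper ever uses (\Cref{prelimcor:colonref} takes $J=R$); but for a general fractional ideal $J$ the claim can genuinely fail. For instance, over $R=k[[t^2,t^3]]$ with $I=R$, $x=1$ and $J=R+tR$, one has $\Hom_R(I,J)\cong J:I=J$, which is minimally $2$-generated, while $\frac1x(xJ:_RI)=J\cap R=R$ is principal, so the two modules are not even abstractly isomorphic. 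This imprecision is inherited from the statement itself (the $\frac1x$-formula is the Huneke--Swanson version, intended for ideals $J$ of $R$); if you add the hypothesis $J\subseteq R$, or more generally $x(J:I)\subseteq R$, your ``clearing denominators'' step closes and the proof of (2) is complete.
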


\begin{cor}\label{prelimcor:colonref}
    For any ideal $I$ with a regular element $x$, the ideal $xR:_R I$ is reflexive.
\end{cor}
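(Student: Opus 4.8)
The plan is to recognize the colon ideal $xR:_R I$ as (a copy of) the dual module $I^{*}=\Hom_R(I,R)$, and then to conclude via the general fact (\Cref{prelimprop:refprop}(1)) that dual modules are reflexive.

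First, since $x\in I$ is a regular element of $R$, I would apply \Cref{prelimprop:refprop}(2) with the fractional ideal taken to be $J=R$. This produces a chain of $R$-module isomorphisms $I^{*}=\Hom_R(I,R)\cong R:I\cong \frac{1}{x}(xR:_R I)$. Next, because $x$ is a nonzerodivisor, multiplication by $x$ is an $R$-module isomorphism $R\xrightarrow{\ \sim\ }xR$, and hence also an isomorphism $\frac{1}{x}(xR:_R I)\xrightarrow{\ \sim\ }xR:_R I$. Composing these identifications gives $xR:_R I\cong I^{*}$.

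Finally, \Cref{prelimprop:refprop}(1), applied to $M=I$, says that $I^{*}$ is reflexive; since reflexivity is preserved under $R$-module isomorphism, it follows that $xR:_R I$ is reflexive, as claimed.

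I do not expect any genuine obstacle: the statement is essentially a repackaging of \Cref{prelimprop:refprop}(1)--(2). The only point that needs a little care is matching up the regular element, namely checking that the element $x$ in the identification $R:I\cong\frac{1}{x}(xR:_R I)$ coming from \Cref{prelimprop:refprop}(2) is the same $x$ defining the colon ideal $xR:_R I$; this is exactly what the hypothesis $x\in I$ supplies. (As an alternative route, one can argue directly: \Cref{prelimprop:refprop}(2) with $J=xR$ gives $\Hom_R(I,xR)\cong xR:I$; the containment $xR:I\subseteq R$ holds because $x\in I$ forces $\alpha x\in\alpha I\subseteq xR$ and hence $\alpha\in R$ for every $\alpha\in xR:I$, so $xR:I=xR:_R I$; and the $R$-module isomorphism $xR\cong R$ shows $\Hom_R(I,xR)\cong\Hom_R(I,R)=I^{*}$, which is reflexive.)
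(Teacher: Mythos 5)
Your proof is correct and follows essentially the same route as the paper: identify $xR:_R I\cong \frac{1}{x}(xR:_R I)\cong I^*$ via \Cref{prelimprop:refprop}(2) and then invoke \Cref{prelimprop:refprop}(1) to conclude reflexivity. The extra care you take in matching the regular element $x$ (and the alternative argument via $\Hom_R(I,xR)$) is fine but not needed beyond what the paper records.
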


\begin{proof}
Observe that $xR:_R I\cong \frac{1}{x}(xR:_R I)$ and the latter can be identified with $I^*$ using \Cref{prelimprop:refprop}(2). Now \Cref{prelimprop:refprop}(1) finishes the proof.
\end{proof}

In the context of \Cref{prelimprop:refprop}(2), any isomoprhism between two fractional ideal is given by multiplication by ana element of $Q$ \cite[Remark 2.2]{Maitra:2022}.

\subsection{Trace Ideal of a Module}
Given a module $M$, we define the trace ideal of $M$ as $$\ds \tr_R(M):=\sum_{f \in M^*}f(M).$$ An ideal $I$ is said to be a trace ideal if $I=\tr_R(M)$ for some module $M$. Notice that if $M\cong N$, then $\tr_R(M)=\tr_R(N)$, i.e., trace is invariant under isomorphisms. We record some important properties of trace ideals that we will use in this article. 

\begin{prop}\label{prelimprop:traceproperties}
    Let $R$ be a one dimensional analytically unramified local Cohen-Macaulay ring and let the conductor ideal be $\cC$. Let $M$ be a non-zero finitely generated $R$-module. The following statements hold.

    \begin{enumerate}
        \item \cite[Proposition 2.8 (iii)]{lindo2017trace} The equality $\tr_R(M)=R$ occurs if and only if $\ds M$ has $R$ a direct summand. In particular, if $M$ is an ideal, then $\tr_R(M)=R$ if and only if $M$ is a principal ideal.

        \item \cite[Proposition 2.8 (v)]{lindo2017trace}The inclusion $\tr_R(M)\subseteq \tr_R(M^*)$ holds and equality holds if and only if $M$ is reflexive.

        \item \cite[Proposition 2.4(2)]{kobayashi2019rings} If $M$ is a non-zero fractional ideal of $R$ containing a regular element of $R$, then $\tr_R(M)=(R:M)M$.

        \item \cite[Proposition 2.8 (iv)]{lindo2017trace} If $M$ is a regular ideal of $R$, then $M\subseteq \tr_R(M)$ with equality if and only if $M$ is a trace ideal.

         \item \cite[Corollary 3.6]{DMS23} If $M$ is an ideal with a principal reduction, then $\cC\subseteq \tr_R(M)$.

        \item \cite[Lemma 3.7]{DMS23} If $M$ is an ideal with  a regular element $x$, then $xR:_R M\subseteq \tr_R(M)$.

     \end{enumerate}
\end{prop}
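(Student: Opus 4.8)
The plan is to treat the six items as a single package resting on two elementary facts already recorded in this section: the identification $M^{*}\cong R:M$ for a fractional ideal $M$ containing a regular element, together with the fact that such an isomorphism is given by multiplication by an element of $Q$ (\Cref{prelimprop:refprop}(2) and the remark following \Cref{prelimcor:colonref}); and the observation that over the local ring $R$ any module whose trace ideal is all of $R$ admits a \emph{split} surjection onto $R$ by Nakayama. I would prove part (3) first, since it is the workhorse for (5) and (6): writing each $f\in M^{*}$ as multiplication by some $q_{f}\in R:M$, one gets
\[\tr_R(M)=\sum_{f\in M^{*}}f(M)=\sum_{q\in R:M}qM=(R:M)M.\]

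With (3) in hand, (6) is immediate: if $r\in xR:_R M$ then $\tfrac{r}{x}M\subseteq R$, so $\tfrac{r}{x}\in R:M$, and since $x\in M$ we get $r=\tfrac{r}{x}\cdot x\in(R:M)M=\tr_R(M)$. For (5), a principal reduction $x\in M$ forces $M\subseteq\overline{M}=\overline{xR}\subseteq x\overline{R}$ (the last inclusion because a relation $r^{n}+\sum_{i\ge 1}a_{i}r^{n-i}=0$ with $a_{i}\in x^{i}R$ exhibits $r/x$ as integral over $R$ after dividing by $x^{n}$); hence $\tfrac{1}{x}M\subseteq\overline{R}$, so $\tfrac{1}{x}\cC\subseteq R:M$, and therefore $\cC=\tfrac{1}{x}\cC\cdot(x)\subseteq(R:M)M=\tr_R(M)$.

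The remaining three items do not need $M$ to be an ideal. In (4) the inclusion $M\subseteq\tr_R(M)$ holds because the inclusion $M\hookrightarrow R$ is one of the maps being summed; the equality clause follows once one verifies the trace-stability identity $\tr_R(\tr_R(M))=\tr_R(M)$, whose non-formal inclusion is that for any $g\colon\tr_R(M)\to R$ and any $f\colon M\to R$ (corestricted to $\tr_R(M)$) the composite $g\circ f$ lies in $M^{*}$, so that $g(\tr_R(M))=\sum_{f}(g\circ f)(M)\subseteq\tr_R(M)$; then a trace ideal $M=\tr_R(N)$ satisfies $\tr_R(M)=\tr_R(\tr_R(N))=\tr_R(N)=M$, and the converse is trivial. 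In (1), $R\mid M$ plainly gives $\tr_R(M)=R$; conversely $1=\sum f_{i}(m_{i})$ forces some $f_{i}(m_{i})\notin\m$ to be a unit, so $f_{i}$ splits and $R\mid M$, and for a regular ideal this is the same as being free of rank one, that is, principal. In (2), the inclusion $\tr_R(M)\subseteq\tr_R(M^{*})$ comes from $f(m)=\Phi(m)(f)$ with $\Phi(m)\in(M^{*})^{*}$, and when $M$ is reflexive the chain $\tr_R(M)=\tr_R(M^{**})\supseteq\tr_R(M^{*})\supseteq\tr_R(M)$ collapses.

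The one place where genuine care is needed is the converse in (2): $\tr_R(M)=\tr_R(M^{*})$ should force $M$ reflexive. The route I would take uses that $M^{*}$ is itself reflexive (\Cref{prelimprop:refprop}(1)), hence $\tr_R(M^{*})=\tr_R(M^{**})$, so $\tr_R(M)=\tr_R(M^{**})$; for a fractional ideal, applying (3) rewrites this as $(R:M)M=(R:M)M^{**}$ with $M\subseteq M^{**}=R:(R:M)$, and one still needs a cancellation argument to pass from this to $M=M^{**}$. That cancellation step, rather than merely quoting it, is the only real obstacle; everything else is bookkeeping with colon ideals. Since all six statements are proved in the cited references, in the actual write-up I would simply record them with pointers to those sources and reproduce only the two-line arguments for (3), (5) and (6), which get used repeatedly in \Cref{mainresults}.
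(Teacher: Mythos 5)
The paper offers no argument at all for this proposition -- it is a compilation of results quoted from \cite{lindo2017trace}, \cite{kobayashi2019rings} and \cite{DMS23} -- so your closing plan (cite the sources, reproduce only the short computations for (3), (5), (6)) is exactly what the paper does, and the extra sketches you supply are a sound, essentially self-contained alternative for most items. Your derivations of (3), (6), (4), (1) and the forward half of (2) are correct: (3) via the ``every homomorphism is multiplication by an element of $R:M$'' identification, (6) and (5) as two-line consequences of (3), (4) via the idempotence $\tr_R(\tr_R(M))=\tr_R(M)$, and (2) via $f(m)=\Phi(m)(f)$. Two small caveats: in (5) you divide by the reduction $x$, so you are tacitly using that $x$ is a nonzerodivisor; this is automatic once $M$ is a regular ideal (then $xM^n=M^{n+1}$ forces $x$ regular), which is the setting in which the paper applies the statement, but it is worth saying. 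Similarly your reading of the principal-ideal clause in (1) through ``free of rank one'' is the right one for regular ideals, which is all that is used. The one genuine omission you flag yourself -- the converse in (2), that $\tr_R(M)=\tr_R(M^*)$ forces $M$ reflexive -- is indeed not recoverable from your cancellation sketch as written, but since you defer it to \cite[Proposition 2.8]{lindo2017trace} you end up exactly where the paper is; note moreover that in Section 3 only the forward implication of (2) (reflexive $\Rightarrow$ equal traces, together with (4)) is ever invoked, so leaving the converse to the citation costs nothing downstream.
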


\subsection{Partial Trace Ideals}
Given a module $M$ over a one dimensional analytically unramified local domain $R$, we let $\operatorname{h}(M):=\min\{\ell(R/I)\mid f(M)=I\text{~for some ~}f\in M^*\}.$ Any such ideal $I$ which achieves the value $\operatorname{h}(M)$ is called a partial trace ideal of $M$ (see \cite{Maitra:2022, Maitra:2024}). Notice that any such partial trace ideal of $M$ is of the form $f(M)$ for some $f\in M^*$ and is therefore contained within $\tr_R(M)$. Also, if $J$ is a partial trace ideal of $M$, then $J$ is a partial trace ideal of itself by definition \cite[Remark 2.3]{Maitra:2022}. Hence, it is often enough to restrict our attention to ideals.  Recall that if $R$ is a domain with field of fractions $Q$, then the rank of a module $M$ is defined to be $\dim_Q(Q\otimes_R M)$ \cite[Definition 1.4.2]{BH93}. Below are the main results that will be useful to us. 

\begin{prop}\label{prelimprop:partialtr}
    Let $R$ be an analytically unramified one dimensional local domain with integral closure $\overline{R}$.  

    \begin{enumerate}
    \item Let $J$ be any non-zero ideal of $R$.  Consider the following statements.
    \begin{multicols}{2}
    \begin{enumerate}
        \item $\operatorname{h}(J)=\ell(R/J)$,
        \columnbreak 
        \item $R: J\subseteq \overline{R}$.
    \end{enumerate}
    \end{multicols}
    Then $(b)$ implies $(a)$. Further if $\overline{R}$ is a $DVR$, then $(a)$ implies $(b)$.
    \item Let $J$ be a non-zero fractional ideal of $R$. Then for any partial trace ideal $I$ of $J$, $\ds \tr_R(I)=\tr_R(J)$. 

    \item If $\overline{R}$ is a $DVR$, then for any partial trace ideal $I$ of any module $M$, $\overline{\tr_R(I)} = \overline{I}$.
    \end{enumerate}

    \begin{proof}
        Statement (1) is directly stating \cite[Theorem 2.5]{Maitra:2022}. Statement (2) follows \cite[Proposition 3.5]{Maitra:2022} with the added observation that all non-zero fractional ideals have rank one. Statement (3) appears as part of the proof of \cite[Proposition 3.8]{Maitra:2024} and we repeat the proof here for convenience: by statement (1), we have $R:I\subseteq \overline{R}$. Thus, $(R:I)I\subseteq I\overline{R}\subseteq\overline{I\overline{R}}$. Thus $(R:I)I\subseteq \overline{I\overline{R}}\cap R$. Hence,  using \Cref{prelimprop:traceproperties}(3), (4) and \cite[Proposition 1.6.1]{HS06}, we obtain that $I\subseteq \tr_R(I)\subseteq \overline{I}$. Taking integral closure throughout now finishes the proof.
    \end{proof}
\end{prop}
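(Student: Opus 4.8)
The plan is to treat the three parts as the paper does --- parts (1) and (2) are close to \cite[Theorem 2.5]{Maitra:2022} and \cite[Proposition 3.5]{Maitra:2022}, and (3) to part of \cite[Proposition 3.8]{Maitra:2024} --- but I will reconstruct the arguments rather than just cite. For part (1) I would first set up the additive length function $\lambda$ on nonzero fractional ideals of the one--dimensional Cohen--Macaulay local domain $R$, normalized by $\lambda(R)=0$ and satisfying $\lambda(I)-\lambda(I')=\ell(I/I')$ whenever $I'\subseteq I$; since multiplication by a nonzero $\beta\in Q$ is injective on fractional ideals, $w(\beta):=\lambda(\beta R)$ defines a homomorphism $w\colon Q\setminus\{0\}\to\Z$ with $\lambda(\beta I)=\lambda(I)+w(\beta)$ and $w(\beta)=-\ell(R/\beta R)\le0$ for $\beta\in R\setminus\{0\}$. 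Next, for a nonzero ideal $J\subseteq R$, I would use $\Hom_R(J,R)\cong R:J$ (\Cref{prelimprop:refprop}(2)), under which a homomorphism corresponds to multiplication by some $\beta\in R:J$ and carries $J$ to $\beta J$; then $\ell(R/\beta J)=\ell(R/J)-w(\beta)$ for $\beta\ne0$, so
\[
\operatorname{h}(J)=\ell(R/J)-\max\{\,w(\beta)\mid \beta\in R:J,\ \beta\ne0\,\},
\]
and, since $1\in R:J$ makes this maximum $\ge0$, condition (a) becomes exactly ``$w\le0$ on $(R:J)\setminus\{0\}$''.

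With this reformulation, $(b)\Rightarrow(a)$ is the easy implication and holds in general: for $\alpha\in\overline R$ and a fixed $0\ne c\in\cC$ we have $c\alpha^{n}\in R$, so $n\,w(\alpha)+w(c)=w(c\alpha^{n})\le0$ for all $n$, forcing $w(\alpha)\le0$; hence $R:J\subseteq\overline R$ gives $w\le0$ on $R:J$. For the converse, assuming $\overline R$ is a DVR with normalized valuation $\nu$, I would show $w$ is a negative multiple of $\nu$: a unit $u$ of $\overline R$ has $w(u)\le0$ and $w(u^{-1})\le0$ while $w(u)+w(u^{-1})=0$, so $w(u)=0$, i.e.\ $w$ kills $\ker\nu=\overline R^{\times}$; as $\nu$ is surjective, $w=-e\,\nu$ for some integer $e\ge1$ ($e\ge1$ because $R$ is not a field). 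Then ``$w\le0$ on $R:J$'' reads ``$\nu\ge0$ on $R:J$'', i.e.\ $R:J\subseteq\overline R$, which is (b).

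Part (2) should then be quick: writing $I=f(J)$ with $f\in J^{*}$ attaining $\operatorname{h}(J)$ --- which is finite, since clearing denominators of $J$ yields an $f$ with $f(J)$ a regular ideal --- the identification $\Hom_R(J,R)\cong R:J$ makes $f$ multiplication by some nonzero $\beta$ (nonzero, else $\ell(R/I)=\infty$), so multiplication by $\beta$ is an isomorphism $J\cong I$ and the isomorphism-invariance of $\tr_R$ gives $\tr_R(I)=\tr_R(J)$. For part (3), I would first record that $I$ is a partial trace ideal of itself: if some $g\in I^{*}$ had $\ell(R/g(I))<\ell(R/I)=\operatorname{h}(M)$, then $g\circ f\in M^{*}$ (with $f\in M^{*}$ the map realizing $I$) would contradict minimality, so $\operatorname{h}(I)=\ell(R/I)$; part (1), applied to $I$ with $\overline R$ a DVR, then gives $R:I\subseteq\overline R$. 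From there, $\tr_R(I)=(R:I)I\subseteq I\overline R\subseteq\overline{I\overline R}\cap R\subseteq\overline I$, using \Cref{prelimprop:traceproperties}(3) and \cite[Proposition 1.6.1]{HS06}, while $I\subseteq\tr_R(I)$ by \Cref{prelimprop:traceproperties}(4); taking integral closures in $I\subseteq\tr_R(I)\subseteq\overline I$ gives $\overline{\tr_R(I)}=\overline I$.

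The hard part will be the converse in part (1) --- identifying the length homomorphism $w$ with $-e\,\nu$ --- which is also the single place the DVR hypothesis is genuinely used; the rest is bookkeeping once $\lambda$, the formula for $\operatorname{h}(J)$, and the conductor trick are in hand.
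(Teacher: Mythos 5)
Your proposal is correct, but it takes a different route from the paper for parts (1) and (2): the paper simply cites \cite[Theorem 2.5]{Maitra:2022} for (1) and \cite[Proposition 3.5]{Maitra:2022} for (2), whereas you reconstruct these from scratch via the additive length function $\lambda$ on nonzero fractional ideals and the induced homomorphism $w\colon Q\setminus\{0\}\to\Z$, $w(\beta)=\lambda(\beta R)$. Your reduction of condition (a) to ``$w\le 0$ on $(R:J)\setminus\{0\}$'', the conductor trick $c\alpha^n\in R$ forcing $w\le 0$ on $\overline R\setminus\{0\}$ (this is exactly where analytic unramifiedness, i.e.\ $\cC\neq 0$, enters), and the identification $w=-e\,\nu$ with $e\ge 1$ in the DVR case (killing $\overline R^{\times}=\ker\nu$ via $w(u)+w(u^{-1})=0$) are all sound; the well-definedness and additivity of $\lambda$, which you assert rather than construct, is routine (e.g.\ $\lambda(I)=\ell(I/(I\cap R))-\ell(R/(I\cap R))$ works). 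Your part (2) argument --- the minimizing $f$ is multiplication by a nonzero $\beta\in Q$, hence $J\cong I$ and trace is isomorphism-invariant --- is a clean direct substitute for the citation, and your part (3) is essentially identical to the paper's inline proof (including the observation, which the paper attributes to \cite[Remark 2.3]{Maitra:2022}, that a partial trace ideal is a partial trace ideal of itself, which you prove by composing with the realizing map). What the two approaches buy: the paper's version is short because it outsources (1) and (2); yours is self-contained and makes visible exactly where the DVR and analytically unramified hypotheses are used, at the cost of having to set up the length/valuation bookkeeping.
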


\section{Main results}\label{mainresults}
Our primary goal in this section is to study some partial cases where we can guarantee that the trace ideal of a reflexive ideal is reflexive, in the situation when $(R,\m, k)$ is a one dimensional analytically unramified Cohen-Macaulay local ring. We first recall some known cases and also provide the proofs for convenience of the reader. 

\begin{lem}\label{mainlem:intclosred}
  Let $R$ be an analytically unramified one dimensional Cohen-Macaulay local ring with infinite residue field. Then for any regular ideal $I$, $\overline{I}=I\overline{R}\cap R$. In particular, any regular ideal shared by both $R$ and $\overline{R}$ is integrally closed.
\end{lem}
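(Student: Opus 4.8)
The plan is to prove the key identity $\overline{I} = I\overline{R} \cap R$ for any regular ideal $I$, and then deduce the ``in particular'' statement as an easy corollary. The inclusion $\overline{I} \subseteq I\overline{R} \cap R$ is essentially formal: if $r \in \overline{I}$ then $r$ satisfies a monic equation $r^n + a_1 r^{n-1} + \cdots + a_n = 0$ with $a_i \in I^i$, which exhibits $r$ as integral over the Rees-type situation and shows $r \in \overline{I\overline{R}}$ (integral closure of the ideal $I\overline{R}$ in $\overline{R}$); but $\overline{R}$ is integrally closed in $Q$, and an element of $\overline{R}$ integral over an ideal is in particular integral over $\overline{R}$, so... actually the cleanest route is: $r$ is integral over $I$, hence integral over the subring $R[It]$ in the Rees algebra sense, hence $r \in \overline{I}\,\overline{R}$-type reasoning — more directly, by \cite[Proposition 1.6.1]{HS06} (cited already in the excerpt) one has $\overline{I}\,\overline{R} = \overline{I\overline{R}}$ or at least $\overline{I} \subseteq \overline{I\overline{R}}$, and for the ideal $I\overline{R}$ of the normal ring $\overline{R}$ one should check $\overline{I\overline{R}} \cap R \subseteq$ the claimed object. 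So the plan for this direction is to cite persistence of integral closure under the ring map $R \to \overline{R}$ to get $\overline{I} \subseteq \overline{I\overline{R}}$, and then observe $\overline{I} \subseteq R$ trivially, giving $\overline{I} \subseteq (I\overline{R} \cap R)$ once we know $\overline{I\overline{R}} = I\overline{R}$.

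The heart of the matter is therefore the reverse inclusion, and the main obstacle is showing that $I\overline{R}$ is already integrally closed as an ideal of $\overline{R}$, equivalently that $\overline{I\overline{R}} \cap R \subseteq \overline{I}$. Here is where I would use the hypotheses. Since $k$ is infinite and $I$ is regular, by the discussion in \Cref{prelimsec:intclos} there is a principal reduction $xR$ of $I$ generated by a regular element $x$, with $\overline{xR} = \overline{I}$. Then $x\overline{R}$ is a principal ideal in the one-dimensional reduced ring $\overline{R}$; since $\overline{R}$ is a (possibly non-local) normal domain-like ring — a finite product of Dedekind-type domains, being the normalization of a one-dimensional analytically unramified ring — principal ideals generated by regular elements are integrally closed in $\overline{R}$. (Concretely: $\overline{R}$ is a one-dimensional normal Noetherian ring, hence locally a DVR at each regular prime, and a locally principal ideal that is locally integrally closed is integrally closed.) Since $xR$ is a reduction of $I$, $x\overline{R}$ is a reduction of $I\overline{R}$, so $\overline{I\overline{R}} = \overline{x\overline{R}} = x\overline{R}$. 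Hence $\overline{I\overline{R}} \cap R = x\overline{R} \cap R$. Now $x\overline{R} \cap R \subseteq \overline{xR} = \overline{I}$ because any $r \in x\overline{R} \cap R$ satisfies $r/x \in \overline{R}$, i.e. $r/x$ is integral over $R$, which translates directly into $r$ being integral over the ideal $xR$.

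Putting the two inclusions together gives $\overline{I} = I\overline{R} \cap R$. For the ``in particular'' statement: suppose $I$ is a regular ideal with $I\overline{R} = I$ (an ideal shared by $R$ and $\overline{R}$, in the sense that $I$ is also an ideal of $\overline{R}$). Then by the identity just proved, $\overline{I} = I\overline{R} \cap R = I \cap R = I$, so $I$ is integrally closed. I expect the only delicate point requiring care in the write-up is the claim that a principal regular ideal of $\overline{R}$ is integrally closed; this is standard (it follows from $\overline{R}$ being normal and one-dimensional, so that each relevant localization is a DVR where principal ideals are obviously integrally closed, combined with the fact that integral closure of ideals commutes with localization), but since $\overline{R}$ need not be a domain one must phrase it in terms of localizations at the finitely many minimal primes over the regular locus rather than invoking ``DVR'' directly. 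Everything else is bookkeeping with \cite[Proposition 1.6.1]{HS06}, \cite[Corollary 1.2.5]{HS06}, and the reduction facts already quoted in the excerpt.
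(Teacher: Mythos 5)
Your argument is correct and follows essentially the same route as the paper: pass to a principal reduction $x$ of $I$ (available since $k$ is infinite), use that $x\overline{R}=\overline{x\overline{R}}$ because $\overline{R}$ is integrally closed in $Q$, identify $\overline{I\overline{R}}$ with $x\overline{R}$ via $\overline{xR}=\overline{I}$, and contract back to $R$. The only cosmetic difference is that you verify the contraction step $x\overline{R}\cap R\subseteq\overline{xR}$ by a direct monic-equation manipulation, whereas the paper invokes \cite[Proposition 1.6.1]{HS06} for $\overline{I\overline{R}}\cap R=\overline{I}$; both are fine.
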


\begin{proof}
    By \cite[Proposition 1.6.1]{HS06}, it is enough to show that $\overline{I\overline{R}}=I\overline{R}$. Since the residue field is infinite, there exists a minimal reduction $x$ of $I$. Since $x\in I$, we get that $x\overline{R}\cap R\subseteq I\overline{R}\cap R\subseteq \overline{I\overline{R}}\cap R=\overline{I}$. However, since $\overline{R}$ is integrally closed, we get that $x\overline{R}=\overline{x\overline{R}}$ \cite[Proposition 1.5.2]{HS06}. Now the fact that $\overline{xR}=\overline{I}$, finishes the first part of the proof. For the last statement, we use the fact that $I\overline{R}=I$ whenever $I$ is shared by both $R$ and $\overline{R}$.
\end{proof}

 \begin{prop}\label{mainprop:intclosref}
Let $R$ be an analytically unramified one dimensional Cohen-Macaulay ring with conductor ideal $\cC$ and infinite residue field. Then any integrally closed ideal containing $\cC$ is a reflexive trace ideal. 
\end{prop}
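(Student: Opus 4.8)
The plan is to show that if $I$ is an integrally closed ideal with $\cC \subseteq I$, then $I$ is both a trace ideal and reflexive. First I would establish that $I$ is a trace ideal. By \Cref{prelimprop:traceproperties}(4), it suffices to show $\tr_R(I) \subseteq I$ (the reverse inclusion is automatic for regular ideals, and note $I$ is regular since $\cC$ is regular under our hypotheses). Using \Cref{prelimprop:traceproperties}(3), $\tr_R(I) = (R:I)I$, so I need $(R:I)I \subseteq I$, i.e. $R:I \subseteq I:I$. The key observation is that since $\cC \subseteq I$, any $\alpha \in R:I$ satisfies $\alpha I \subseteq R$, hence $\alpha \cC \subseteq \alpha I \subseteq R$; but more usefully, I want to see that $R:I \subseteq \overline{R}$. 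Indeed, $\overline{R}$ is a module-finite birational extension, and any element $\alpha$ with $\alpha I \subseteq R \subseteq I$ (using $\cC \subseteq I$ only to know $I$ is regular — actually I should be careful here) gives $\alpha I^n \subseteq I$ for all $n$ by induction, so $\alpha$ is integral over $I$ in the Rees-algebra sense... Let me instead use the cleaner route: since $x$ (a minimal reduction of $I$) lies in $I$ and $\alpha \in R:I$ means $\alpha x \in R$, while $\alpha I \subseteq R$ gives $\alpha I \cdot \overline{R}$-behavior. The cleanest argument: $(R:I)I \subseteq \overline{I} = I$ because $(R:I)I = \tr_R(I)$ and elements of $\tr_R(I)$ are integral over $I$ — this follows since for $\alpha \in R:I$ and the minimal reduction $x$ of $I$, we have $\alpha x I^n = \alpha x I^{n}$; more directly, $(R:I) \subseteq (xR:_R I)/x$ type reasoning shows $(R:I)I \subseteq \overline{xR} = \overline{I} = I$. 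So $\tr_R(I) = I$, proving $I$ is a trace ideal.

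Next I would prove reflexivity. By \Cref{prelimprop:refprop}(3), I must show $I = R:(R:I)$. The inclusion $I \subseteq R:(R:I)$ always holds. For the reverse, the strategy is to pass through the conductor. Since $\cC \subseteq I$, taking colons reverses inclusions: $R:I \subseteq R:\cC$. Now $R:\cC$ equals $\overline{R}$ (this is a standard fact: $\cC = R:\overline{R}$ and $\overline{R}$ being module-finite and integrally closed gives $R:(R:\overline{R}) = \overline{R}$, equivalently $\overline{R}$ is reflexive, which also follows from \Cref{prelimprop:refprop}(1) applied to $\overline{R} = R:\cC = \cC^*$ up to the twist, since $\cC$ is regular). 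Hence $R:I \subseteq \overline{R}$, which matches what appeared in the trace computation. Then $R:(R:I) \supseteq R:\overline{R} = \cC$, but I need an upper bound: from $R:I \subseteq \overline{R}$ I get $(R:I) \cdot I \subseteq \overline{R} \cdot I = \overline{R} I$. I want to conclude $R:(R:I) \subseteq R:(\text{something}) \subseteq I$. Here is the key step: since $\cC \subseteq I \subseteq R$ and $\cC$ is shared by $R$ and $\overline{R}$, work in $\overline{R}$: the ideal $I\overline{R}$ of $\overline{R}$ satisfies $I\overline{R} \cap R = \overline{I} = I$ by \Cref{mainlem:intclosred}. Now $R:(R:I) = R:(R:I)$; I claim $R:(R:I) \subseteq I\overline{R} \cap R$. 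Indeed any $\beta \in R:(R:I)$ satisfies $\beta(R:I) \subseteq R$; since $R:I$ contains $\overline{R}:\overline{R}I = $ the conductor-type object... The essential point is that $R:I$, viewed as a fractional $\overline{R}$-ideal (which it is, being contained in $\overline{R}$ and an $\overline{R}$-module because... hmm, it need not be an $\overline{R}$-module a priori).

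Let me restructure the reflexivity argument more robustly. Since $I \supseteq \cC = R:\overline{R}$, and $I$ is integrally closed hence (by \Cref{mainlem:intclosred} and its proof) $I = I\overline{R} \cap R$ with $I\overline{R}$ an honest ideal of $\overline{R}$. The double dual $I^{**} = R:(R:I)$. Using \Cref{prelimprop:refprop}(2), $R:I = \frac{1}{x}(xR:_R I)$, and by \Cref{prelimcor:colonref} the ideal $xR:_R I$ is reflexive, so $R:I \cong (xR:_R I)$ is reflexive. Then $R:(R:I) = (R:I)^*$ is reflexive by \Cref{prelimprop:refprop}(1), which gives that $I^{**}$ is reflexive but not yet that $I^{**} = I$. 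To close the gap I compute both sides after intersecting appropriately: I will show $R:(R:I) \subseteq \overline{I} = I$. For $\beta \in R:(R:I)$: we have $\overline{R} \subseteq$ ... actually $\cC = R:\overline{R}$ and $\cC \subseteq I$ gives $\overline{R} \subseteq R:\cC$; I want $\beta \overline{R} \subseteq R$ is too strong. Instead, for each $\alpha \in R:I$, $\beta \alpha \in R$. Since minimal reduction $x \in I$ and $x^{-1}I \subseteq \overline{R}$-type facts: $x \in I$ gives $\frac{1}{x}(xR:_R I) \ni 1$, and I want to exploit that $(R:I)$ generates, over $\overline{R}$, the fractional ideal $\overline{R}:I\overline{R}$.

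The main obstacle, as the thrashing above indicates, is pinning down the reverse inclusion $R:(R:I) \subseteq I$ cleanly; everything else (that $I$ is a regular trace ideal, that $R:I \subseteq \overline{R}$, that $I = I\overline{R}\cap R$) is routine given the cited lemmas. I expect the clean resolution to run: $R:(R:I) = R:(R:I)$ and since $\cC \subseteq I$ one has $R:I \subseteq R:\cC = \overline{R}$ so $R:I$ is a fractional ideal sandwiched in $\overline{R}$; then $(R:I) \supseteq (\overline{R}:_{\overline{R}} I\overline{R})$ ... after which $R:(R:I) \subseteq R:(\overline{R}:_{\overline{R}}I\overline{R}) = (I\overline{R}:_{\overline{R}}\overline{R})\cap R = I\overline{R}\cap R = I$, using that $I\overline{R}$ is an integrally closed (hence, over the possibly-non-DVR $\overline{R}$, at least divisorial enough) ideal of $\overline{R}$ together with \Cref{mainlem:intclosred}. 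I would write the final proof by first proving $R:I = \overline{R}:_{\overline{R}}(I\overline{R})$ as fractional ideals — the inclusion $\subseteq$ from $R:I \subseteq \overline{R}$ and $(R:I)I\overline{R} \subseteq (R:I)\overline{I}\cdot$-stuff $\subseteq \overline{R}$, and $\supseteq$ since any such colon element multiplies $I$ into $\overline{R}\cap$(appropriate) $= R$ after noting it already multiplies into $\overline{R}$ and lands in $R$ because it multiplies $\cC \subseteq I$ into $\cC \subseteq R$ — wait, that shows it multiplies $I$ into $R$ only if it multiplies all of $I$, which is the hypothesis. Then reflexivity of $I$ reduces to reflexivity of the integrally closed ideal $I\overline{R}$ as a fractional $R$-ideal, handled by \Cref{mainlem:intclosred} plus \Cref{prelimprop:refprop}(3).
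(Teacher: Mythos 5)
Your proposal has a genuine gap exactly at the heart of the statement: reflexivity. You never establish the reverse inclusion $R:(R:I)\subseteq I$ (and you say so yourself), and the bridge you sketch to close it does not work. You want $\overline{R}:_{\overline{R}}(I\overline{R})\subseteq R:I$, so that dualizing gives $R:(R:I)\subseteq R:\bigl(\overline{R}:_{\overline{R}}(I\overline{R})\bigr)\subseteq I\overline{R}\cap R=I$; but the true containment goes the other way (if $\alpha I\subseteq R$ then $\alpha I\overline{R}\subseteq \overline{R}$, so $R:I\subseteq \overline{R}:_{\overline{R}}(I\overline{R})$), and the two colons genuinely differ: for $R=k[[t^3,t^4,t^5]]$ and $I=\cC=\m$ one has $R:I=\overline{R}=k[[t]]$, while $\overline{R}:_{\overline{R}}(I\overline{R})=t^{-3}k[[t]]$. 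A repair along the lines you were groping for: since $I$ is integrally closed, $I=I\overline{R}\cap R$ by \Cref{mainlem:intclosred}; the regular ideal $I\overline{R}$ of the semilocal one-dimensional normal ring $\overline{R}$ is principal, generated by a nonzerodivisor, so $I\overline{R}\cong\overline{R}=R:\cC$ as fractional ideals, and hence $I\overline{R}$ is divisorial (it is, up to a unit of $Q$, a dual; \Cref{prelimprop:refprop}(1),(2)); then monotonicity of double duals gives $R:(R:I)\subseteq\bigl(R:(R:I\overline{R})\bigr)\cap\bigl(R:(R:R)\bigr)=I\overline{R}\cap R=I$. (The paper itself does none of this by hand: it simply combines \Cref{mainlem:intclosred} with the cited result \cite[Proposition 3.9]{DMS23} applied to $S=\overline{R}$.)

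The trace-ideal half is also shakier than it needs to be as written: the claim that ``elements of $\tr_R(I)$ are integral over $I$'' is false in general (for $I$ principal, $\tr_R(I)=R\neq\overline{I}$), and $(R:I)I\subseteq\overline{xR}$ does not follow from $R:I=\tfrac1x(xR:_RI)$, which only yields $(R:I)I\subseteq R$. The correct ingredient is the one you do prove later: $\cC\subseteq I$ gives $R:I\subseteq R:\cC=\overline{R}$, whence $(R:I)I\subseteq I\overline{R}\cap R=\overline{I}=I$, and together with \Cref{prelimprop:traceproperties}(3),(4) this yields $\tr_R(I)=I$. So the first half is repairable from material already in your write-up, but the reflexivity half, as proposed, does not go through.
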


\begin{proof}
    Since $R$ is analytically unramified, we know that $\cC$ is a proper regular ideal. The proof now follows from \Cref{mainlem:intclosred} and \cite[Proposition 3.9]{DMS23} by taking $S=\overline{R}$ and $I$ to be any integrally closed ideal containing $\cC$. 
\end{proof}

\begin{cor}\label{maincor:intclosref}
    Let $R$ be an analytically unramified one dimensional Cohen-Macaulay ring with infinite residue field. Let $J$ be a regular trace ideal of $R$. Then the integral closure $\overline{J}$ is a reflexive trace ideal and $\tr_R(\overline{J}^*)=\overline{J}$.
\end{cor}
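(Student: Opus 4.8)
The plan is to reduce the statement to three already-established facts: Proposition \ref{mainprop:intclosref} (integrally closed ideals containing $\cC$ are reflexive trace ideals), together with parts (2), (4) and (5) of Proposition \ref{prelimprop:traceproperties}. The only thing that actually needs checking is that $\overline{J}$ satisfies the hypotheses of Proposition \ref{mainprop:intclosref}, i.e. that it contains the conductor $\cC$; once that is in place, both assertions fall out formally.

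First I would record that, since $J$ is a regular ideal and the residue field $k$ is infinite, $J$ admits a principal (minimal) reduction generated by a regular element, as recalled in \Cref{prelimsec:intclos}. Hence Proposition \ref{prelimprop:traceproperties}(5) applies and gives $\cC \subseteq \tr_R(J)$. On the other hand, $J$ is by hypothesis a trace ideal and is regular, so Proposition \ref{prelimprop:traceproperties}(4) yields $\tr_R(J) = J$. Combining these, $\cC \subseteq J \subseteq \overline{J}$, and $\overline{J}$ is integrally closed by construction (and regular, since it contains the regular ideal $J$). Proposition \ref{mainprop:intclosref} then immediately gives that $\overline{J}$ is a reflexive trace ideal.

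For the last equality I would argue as follows. Because $\overline{J}$ is reflexive, the inclusion $\tr_R(M) \subseteq \tr_R(M^*)$ of Proposition \ref{prelimprop:traceproperties}(2) becomes an equality with $M = \overline{J}$, so $\tr_R(\overline{J}) = \tr_R(\overline{J}^*)$. Because $\overline{J}$ is a regular trace ideal, Proposition \ref{prelimprop:traceproperties}(4) gives $\tr_R(\overline{J}) = \overline{J}$. Chaining these two equalities produces $\tr_R(\overline{J}^*) = \overline{J}$, which completes the proof. I do not anticipate any genuine obstacle: the whole argument is a concatenation of cited results, and the single substantive step is the containment $\cC \subseteq \tr_R(J) = J$, which is precisely what the trace-ideal hypothesis on $J$ is there to provide.
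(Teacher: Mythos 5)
Your proposal is correct and follows essentially the same route as the paper: use Proposition \ref{prelimprop:traceproperties}(5) (together with (4)) to get $\cC\subseteq J\subseteq\overline{J}$, apply Proposition \ref{mainprop:intclosref} to conclude $\overline{J}$ is a reflexive trace ideal, and then combine Proposition \ref{prelimprop:traceproperties}(2) and (4) to obtain $\tr_R(\overline{J}^*)=\overline{J}$. The paper's proof is just a more compressed version of the same concatenation of citations.
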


\begin{proof}
    Notice that $J$ contains the conductor ideal by \Cref{prelimprop:traceproperties}(5). The proof now follows immediately from \Cref{mainprop:intclosref} and  \Cref{prelimprop:traceproperties} $(2),(4)$.
\end{proof}

We should also mention that \cite[Theorem 6.2]{DL:2024} provides a strong result that the integral closure of a trace ideal is again a trace ideal.

Recall that a Cohen-Macaulay local ring $R$ of dimension $\dim(R)$ is of \textit{minimal multiplicity} if $e(R)=\mu(\m)+\dim(R)-1$ where $e(R)$ denotes the Hilbert Samuel multiplicity of $R$ (for further details, we refer the reader to \cite[Chapter 11]{HS06}, \cite[1]{abhyankar1967local}). We shall use the following characterization of minimal multiplicity: if the residue field $k$ is infinite, then $R$ is of minimal multiplicity if and only if there exists a minimal reduction $x$ of $\m$ such that $\m^2=x\m$ \cite[Theorem 1]{sally1977associated}.  The following lemma is well known in the literature. We provide a proof here. Recall that we are always assuming that $R$ is non-regular, i.e., $\mu(\m)\geq 2$.
\begin{lem}\label{mainlem:minmult}
    Let $(R,\m, k)$ be a one dimensional non-regular Cohen-Macaulay ring with infinite residue field. Then $R$ is of minimal multiplicity if and only if $xR:_R \m=\m$ for some minimal reduction $x$ of $\m$. 
\end{lem}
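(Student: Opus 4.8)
The plan is to prove both implications using the characterization that $R$ has minimal multiplicity if and only if there is a minimal reduction $x$ of $\m$ with $\m^2 = x\m$, and to translate the equation $\m^2 = x\m$ into the colon-ideal statement $xR :_R \m = \m$. First I would record the trivial inclusion: since $x \in \m$ and $x\m \subseteq \m^2$, we always have $x \cdot \m \subseteq \m^2$, hence every element of $\m$ multiplies $\m$ into $x R$ precisely when $\m^2 \subseteq xR$; more simply, $\m \subseteq (xR :_R \m)$ is automatic because $x\m \subseteq x R$. So the content of the lemma is the reverse inclusion $xR :_R \m \subseteq \m$, i.e., that no unit lies in the colon ideal (which would force $\m \subseteq xR$, impossible since $\mu(\m) \geq 2$ and $x$ is a single element that is a minimal reduction, hence a minimal generator).

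For the forward direction, assume $R$ has minimal multiplicity, so pick a minimal reduction $x$ of $\m$ with $\m^2 = x\m$. I want $xR :_R \m \subseteq \m$. Suppose $a \in xR :_R \m$, so $a\m \subseteq xR$. If $a \notin \m$ then $a$ is a unit, so $\m \subseteq xR = (x)$; but then $x$ generates $\m$, contradicting $\mu(\m) \geq 2$. Hence $a \in \m$ and we are done — actually this direction does not even use $\m^2 = x\m$, only that $x$ is a minimal reduction (so $x$ is a non-unit and a single element). Wait: I should double-check whether we need $\m^2 = x\m$ at all here; re-examining, the point is that for the forward direction we just need \emph{some} minimal reduction $x$ of $\m$, and minimal multiplicity guarantees one exists for which $\m^2 = x\m$, but the colon computation $xR :_R \m = \m$ holds for that $x$ by the unit argument above. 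So the forward direction is: minimal multiplicity $\Rightarrow$ exists minimal reduction $x$ with $\m^2 = x\m$ $\Rightarrow$ (by the unit argument, using $\mu(\m) \geq 2$) $xR :_R \m = \m$.

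For the reverse direction, assume $xR :_R \m = \m$ for some minimal reduction $x$ of $\m$; I must deduce $\m^2 = x\m$, which by Sally's characterization gives minimal multiplicity. Since $x$ is a minimal reduction of $\m$, we have $\overline{xR} = \overline{\m}$, and by \Cref{mainlem:intclosred} applied to the integrally closed ideal... more directly, $x\m \subseteq \m^2$ always, so I need $\m^2 \subseteq x\m$. Take $b \in \m^2 \subseteq \m$. For any $c \in \m$, $bc \in \m^3 \subseteq x\m^2 \subseteq x \cdot x\m = x^2\m \subseteq xR$... hmm, this requires knowing $\m^3 = x\m^2$ first, which is circular. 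Let me instead argue: $x\m = xR \cap \m^2$ would follow if $\m^2 \subseteq xR$; and $\m^2 \subseteq xR$ is equivalent to $xR :_R \m^2 = R$, which is not what we assumed. The right approach is to use the reduction property directly: $x\m^n = \m^{n+1}$ for some $n$, and I want to bring $n$ down to $1$ using $xR:_R\m = \m$. The cleanest route: since $x$ is a minimal reduction, $\m/x\m$ is the fiber cone in degree one and $\m^{n+1} = x\m^n$ for $n \gg 0$; the hypothesis $xR :_R \m = \m$ says $\m^2 \subseteq xR$ is \emph{not} forced but that anything colon-ing $\m$ into $xR$ is already in $\m$. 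I expect the actual argument uses that $\m/(xR:_R\m)$ has length equal to... Actually, the key identity I would exploit is $\ell(\m/x\m) = e(\m)$ for a minimal reduction $x$ (in dimension one), combined with $\ell(\m/x\m) = \ell(R/xR) - \ell(R/\m) + \ell((xR:_R\m)/xR)$ via the exact sequence relating $R/xR$, $\m/x\m$, and the colon; setting $xR:_R\m = \m$ collapses the correction term and yields $e(R) = \mu(\m)$, which is minimal multiplicity in dimension one. \textbf{The main obstacle} is getting this length bookkeeping exactly right — identifying the correct short exact sequences (likely $0 \to (xR :_R \m)/xR \to R/xR \to R/(xR:_R\m) \to 0$ together with $0 \to \m/x\m \to R/xR \to R/\m \to 0$ under the hypothesis) and verifying that $\ell(\m/x\m) = e(R)$ for a principal reduction in the one-dimensional Cohen–Macaulay setting, then reading off $e(R) = \mu(\m)$ and invoking Sally's theorem in reverse. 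Once the equivalence $e(R) = \mu(\m) \Leftrightarrow \m^2 = x\m$ (Sally) is in hand, the reverse implication is immediate.
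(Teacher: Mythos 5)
There is a genuine error at the heart of your proposal: the inclusion $\m\subseteq xR:_R\m$ is \emph{not} automatic. By definition it says $\m\cdot\m\subseteq xR$, i.e.\ $\m^2\subseteq xR$, which (given that $xR$ is a minimal reduction) is essentially the minimal multiplicity condition itself; your justification ``because $x\m\subseteq xR$'' only shows $x\in xR:_R\m$. For example, in $R=k[[t^3,t^7]]$ with $x=t^3$ one has $t^{14}\in\m^2\setminus xR$, so $\m\not\subseteq xR:_R\m$. Consequently your forward direction, which you claim ``does not even use $\m^2=x\m$,'' is broken: the unit argument correctly gives the easy inclusion $xR:_R\m\subseteq\m$ (from $\mu(\m)\ge 2$), but the other inclusion is exactly where $\m^2=x\m\subseteq xR$ must be invoked. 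Indeed, if both inclusions were automatic, the ``only if'' direction of the lemma would force every non-regular one-dimensional Cohen--Macaulay local ring to have minimal multiplicity, which is false.

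The same confusion derails your reverse direction: you assert that the hypothesis does \emph{not} force $\m^2\subseteq xR$, but it does, immediately, since $\m= xR:_R\m$ in particular gives $\m\subseteq xR:_R\m$, i.e.\ $\m^2\subseteq xR$. From there the paper finishes in one line: because $xR$ is a reduction of $\m$, $xR\cap\m^2=x\m$ (\cite[Proposition 8.3.3]{HS06}), so $\m^2=xR\cap\m^2=x\m$ and Sally's theorem applies. Your alternative length-bookkeeping sketch is not only unnecessary but rests on an incorrect identity: $(xR:_R\m)/xR$ is the socle of $R/xR$, whose length is the Cohen--Macaulay type of $R$, so your formula $\ell(\m/x\m)=\ell(R/xR)-\ell(R/\m)+\ell((xR:_R\m)/xR)$ fails whenever $R$ is not Gorenstein (the correct statement is simply $\ell(\m/x\m)=\ell(R/xR)=e(R)$), and you yourself leave this step unresolved. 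So neither implication is established as written, although both are repaired by the single observation that $\m\subseteq xR:_R\m$ is equivalent to $\m^2\subseteq xR$.
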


\begin{proof}
    Observe that $\tr_R(\m)=\m$ as otherwise $\tr_R(\m)=R$ and \Cref{prelimprop:traceproperties}(1) shows that $\m$ must be principal, a contradiction to the regularity assumption. Now observe that $R$ is of minimal multiplicity if and only if $\m^2=x\m$ for some minimal reduction $x$ of $\m$ \cite[Theorem 1]{sally1977associated}. If $\m^2=x\m$, then \cite[Corollary 3.8]{DMS23} shows that $\m=xR:_R \m$. Conversely, assume that $xR:_R \m=\m$. This implies that $\m^2 \subseteq xR$. Now \cite[Proposition 8.3.3]{HS06} shows that $\m^2 = xR\cap\m^2 = x\m$ since $xR$ is a minimal reduction of $\m$. This finishes the proof.
\end{proof}

Recall that colons satisfy the following nice inclusion reversing property: if $J\subseteq K$, then $I:_R K\subseteq I:_R J$. We use this to establish the next result. 

\begin{prop}\label{mainprop:intclosmax}
    Let $R$ be an analytically unramified one dimensional non-regular Cohen-Macaulay ring of minimal multiplicity and infinite residue field. If $I$ is a proper regular trace ideal such that $\overline{I}=\m$, then $I=\m$ and hence reflexive. 
\end{prop}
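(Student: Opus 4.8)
The plan is to prove $\m\subseteq I$; combined with the hypothesis $I\subseteq\overline{I}=\m$ this forces $I=\m$, and reflexivity then follows at once from \Cref{prelimprop:refprop}(4). The engine of the argument is to manufacture a single regular element that is simultaneously a minimal reduction of $I$ and of $\m$, and then to convert the minimal multiplicity hypothesis into the colon identity $yR:_R\m=\m$.

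First I would record that $\m$ is integrally closed: by hypothesis $\m=\overline{I}$, and the integral closure of an ideal is always integrally closed. Since $k$ is infinite and $I$ is regular, $I$ admits a principal minimal reduction $(y)$ with $y$ a regular element of $R$ and $y\in I\subseteq\m$ \cite[Proposition 8.3.7, Corollary 8.3.9]{HS06}. Then $\overline{yR}=\overline{I}=\m=\overline{\m}$, so $(y)$ is a reduction of $\m$, and being principal it is a minimal reduction of $\m$. Because $R$ has minimal multiplicity, a routine length computation gives $\m^2=y\m$ for this minimal reduction (indeed for any), and hence $yR:_R\m=\m$ as in the proof of \Cref{mainlem:minmult}: the inclusion $\m\subseteq yR:_R\m$ is immediate from $\m^2=y\m\subseteq yR$, while any element of $yR:_R\m$ lying outside $\m$ would be a unit, forcing $\m=yR$ and contradicting non-regularity.

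With this identity established, I would finish using the inclusion-reversing property of colons together with the trace results collected in \Cref{prelim}: from $I\subseteq\m$ we obtain $\m=yR:_R\m\subseteq yR:_R I$; by \Cref{prelimprop:traceproperties}(6) (applied with the regular element $y$) we have $yR:_R I\subseteq\tr_R(I)$; and since $I$ is a regular trace ideal, $\tr_R(I)=I$ by \Cref{prelimprop:traceproperties}(4). Concatenating these inclusions yields $\m\subseteq I$, whence $I=\m$. I expect the only subtle point to be the opening step — verifying that the principal reduction of $I$ is genuinely a reduction of $\m$ — which hinges on the small but crucial observation that $\overline{I}=\m$ makes $\m$ integrally closed; once that is in place, everything else is assembled from results already on record.
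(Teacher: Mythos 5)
Your proof is correct and follows essentially the same route as the paper: pick a principal reduction of $I$ that (because $\overline{I}=\m$) is also a minimal reduction of $\m$, use minimal multiplicity to get $yR:_R\m=\m$, and then squeeze $\m=yR:_R\m\subseteq yR:_R I\subseteq\tr_R(I)=I\subseteq\m$ via colon reversal and \Cref{prelimprop:traceproperties}(4),(6), concluding reflexivity from \Cref{prelimprop:refprop}(4). If anything, you are a bit more explicit than the paper about why the reduction of $I$ is a common reduction and why $\m^2=y\m$ holds for that particular reduction, which is a welcome clarification rather than a deviation.
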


\begin{proof}
    Since $\overline{I}=\m$ and $I\subseteq \m$, we can find a common minimal reduction $xR$ of $I$ and $\m$. Since $R$ is of minimal multiplicity, by \Cref{mainlem:minmult}, $xR:_R \m=\m$. Since $I\subseteq \overline{I}$, the conclusion now follows from \Cref{prelimprop:refprop}(4) and the following chain of ideals: $$\m=xR:_R\m=xR:_R\overline{I}\subseteq xR:_R I\subseteq I\subseteq \m.$$
\end{proof}

\begin{remark}
    Observe that we used the case, $\m^2=x\m$ in the above discussion and also the fact that $\tr_R(\m)=\m$. More generally, under the same hypothesis on the ring as above, if $I$ is a regular stable ideal, i.e., $I\cong \Hom_R(I,I)$, then we know that in this case, $I^2=xI$ for some non-zero divisor $x$, and hence $\tr_R(I)=x:_RI\cong I^*$ \cite[Proposition 3.2, Proposition 3.10]{DL:2024}. Reflexivity now follows from \Cref{prelimprop:refprop}(1).
\end{remark} 

We now provide a series of statements based on the co-length of the conductor ideal, which enables us to conclude reflexivity under taking trace in some cases.



\begin{theorem}\label{mainthm:trrefalways}Let $R$ be a one dimensional analytically unramified non-regular Cohen-Macaulay local ring with infinite residue field. Let $J$ be a proper regular trace ideal of $R$. Then the following statements hold. 
 
 \begin{enumerate}
     \item If $\ell(R/\cC)\leq 3$, then $J$ is reflexive.

     \item If $\ell(R/\cC)=4$ and $R$ has minimal multiplicity, then $J$ is reflexive.
 \end{enumerate}
    \end{theorem}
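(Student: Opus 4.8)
The plan is to exploit the fact that a proper regular trace ideal $J$ is squeezed between the conductor $\cC$ and $\m$: since $J$ is a regular ideal with a principal reduction (the residue field is infinite), Proposition~\ref{prelimprop:traceproperties}(5) gives $\cC\subseteq J$, and properness gives $J\subseteq\m$. The key observation is that the integral closure $\overline{J}$ is an integrally closed ideal containing $\cC$, hence by Proposition~\ref{mainprop:intclosref} it is a reflexive trace ideal; moreover $J\subseteq\overline{J}\subseteq\overline{\m}=\m$ (the last equality because $R$ is a one-dimensional local ring, so $\m$ is integrally closed). So the whole game is to pin down how $\overline{J}$ sits between $J$ and $\m$ when $\ell(R/\cC)$ is small, and to leverage the reflexivity of $\overline{J}$ together with the reflexivity criterion $J=R:(R:J)$ from Proposition~\ref{prelimprop:refprop}(3).

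For part (1), I would argue by the length constraint. Since $\cC\subseteq J\subseteq\m$ and $\ell(R/\cC)\le 3$ with $\cC\subsetneq\m$ (as $R$ is non-regular, $\cC$ is proper and in fact $\cC\subseteq\m^{?}$), the chain $\cC\subseteq J\subseteq\m$ leaves very few possibilities for $J$: either $J=\m$ (reflexive by Proposition~\ref{prelimprop:refprop}(4)), or $J=\cC$ (reflexive by the same), or $J$ is one of a small number of intermediate ideals with $\ell(\m/J)=1$. In that remaining intermediate case, $\ell(R/J)$ is small enough that $\overline{J}$, being an integrally closed ideal sandwiched between $J$ and $\m$ with $\ell(R/\overline{J})\le\ell(R/\m)$ forced to equal $\ell(R/\m)$, must actually equal $\m$; then $J$ is a regular trace ideal with $\overline{J}=\m$, and I would invoke an argument in the spirit of Proposition~\ref{mainprop:intclosmax}. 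The subtlety here is that Proposition~\ref{mainprop:intclosmax} assumes minimal multiplicity, so for (1) I need the length bound $\ell(R/\cC)\le 3$ to force either minimal multiplicity or $J=\m$ directly; concretely, $\ell(R/\cC)\le 3$ is restrictive enough that the possible Hilbert functions of $R$ are essentially $(1,1,\dots)$ handled by regularity, or minimal-multiplicity type, so one reduces to the already-established cases. A cleaner route: use Corollary~\ref{maincor:intclosref} to get $\overline{J}$ reflexive, and then show $J=\overline{J}$ by comparing $R:J$ and $R:\overline{J}$ — since $\cC\subseteq J\subseteq\overline{J}$ and $\ell(R/\cC)\le 3$, the colon modules $R:\overline{J}\subseteq R:J\subseteq R:\cC=\overline{R}$ differ by a module of length $\le \ell(R/\cC)\le 3$, and a direct length count (using that $\overline{R}/R$ and $R/\cC$ have related lengths via $\ell(\overline{R}/R)=\ell(R/\cC)$ when $\overline{R}$ is a DVR, or the general inequality otherwise) forces $R:J=R:\overline{J}$, whence $R:(R:J)=R:(R:\overline{J})=\overline{J}\supseteq J\supseteq$ and one checks equality.

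For part (2), with $\ell(R/\cC)=4$ and $R$ of minimal multiplicity, I would again start from $\cC\subseteq J\subseteq\m$ and $\overline{J}$ reflexive. Minimal multiplicity gives the strong structural input $\m^2=x\m$ and $xR:_R\m=\m$ (Lemma~\ref{mainlem:minmult}), which pins down the Hilbert function of $R$ as $1,\mu(\m),\mu(\m),\dots$ and lets me enumerate the ideals between $\cC$ and $\m$. The length-$4$ condition then leaves a short list of candidates for $J$: the ones with $\overline{J}=\m$ fall to Proposition~\ref{mainprop:intclosmax} directly, the ones with $\overline{J}=\cC$ force $J=\cC$ which is reflexive, and any remaining $J$ has $\overline{J}$ a proper integrally closed ideal strictly between $\cC$ and $\m$ — but minimal multiplicity restricts such integrally closed ideals severely (essentially the only integrally closed ideals are powers of $\m$ together with $\cC$-related ones), so one checks each.

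\textbf{Main obstacle.} The hard part will be the ``intermediate'' cases — values of $J$ strictly between $\cC$ and $\m$ whose integral closure is also strictly between $\cC$ and $\m$. Reflexivity of $\overline{J}$ alone does not transfer to $J$, so I expect the real work to be a careful colon-ideal computation: showing $R:J=R:\overline{J}$ (equivalently, that passing to the integral closure does not change the first dual) under the given length bound, which then gives $J=R:(R:J)=R:(R:\overline{J})=\overline{J}$ and a contradiction unless $J$ was integrally closed to begin with. Getting the bookkeeping right — relating $\ell(R/\cC)$, $\ell(\overline{R}/R)$, $\ell(R/J)$, and $\ell(\overline{R}/(R:J))$ — is where minimal multiplicity (and the $\overline{R}$-a-DVR flavor, though not assumed here) does the heavy lifting, and is the step I would spend the most care on.
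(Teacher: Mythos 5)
Your skeleton (squeeze $J$ between $\cC$ and $\m$ via \Cref{prelimprop:traceproperties}(5), use reflexivity of $\cC$, $\m$ and of integrally closed ideals containing $\cC$, and invoke \Cref{mainprop:intclosmax} when $\overline{J}=\m$ under minimal multiplicity) matches the paper's starting point, but the way you handle the intermediate cases has genuine gaps. For part (1), your reduction ``$\ell(R/\cC)\le 3$ forces minimal multiplicity or reduces to established cases'' is false: $R=k[[t^3,t^4]]$ has $\ell(R/\cC)=3$ (values $0,3,4$ below the conductor $t^6$) but $e(R)=3>\mu(\m)=2$, so it is not of minimal multiplicity, and \Cref{mainprop:intclosmax} is simply unavailable; the case of a trace ideal $J$ strictly between $\cC$ and $\m$ with $\overline{J}=\m$ is left unresolved by your argument. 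Your ``cleaner route'' is also not a proof: from $\cC\subseteq J\subseteq\overline{J}$ you get $R:\overline{J}\subseteq R:J$, but no length count is given that forces equality, and even granting $R:J=R:\overline{J}$ you only obtain $R:(R:J)=\overline{J}$, i.e.\ $J$ is reflexive if and only if $J=\overline{J}$ --- which is exactly what needs proving, so ``one checks equality'' is circular. (Trace ideals need not be integrally closed in general, as the paper's own counterexample $(t^7,t^8,t^9)\subsetneq\m$ in $k[[t^7,t^8,t^9,t^{11}]]$ shows, so the length bound must enter in an essential way.) Similarly, in part (2) the claim that minimal multiplicity lets one ``enumerate'' the integrally closed ideals between $\cC$ and $\m$ is asserted, not proved.

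The missing idea, which is what the paper's proof runs on, is to insert the dual of $J$ into the interval as a concrete colon ideal: choosing a minimal reduction $xR$ of $J$, one has $xR:_R J\cong J^*$ (reflexive by \Cref{prelimcor:colonref}), the containment $xR:_R J\subseteq\tr_R(J)=J$ from \Cref{prelimprop:traceproperties}(4),(6), and $\cC\subseteq xR:_R\overline{J}\subseteq xR:_R J$ via the identity $J\cC=\overline{J}\cC=x\cC$ from \cite[Theorem 4.6(1), Corollary 4.10]{DMS23}. This yields the chain $\cC\subseteq xR:_R\overline{J}\subseteq xR:_R J\subseteq J\subseteq\overline{J}\subseteq\m$, and the hypothesis $\ell(R/\cC)\le 3$ (resp.\ $=4$) forces one of these inclusions to be an equality; each equality is then handled directly: $xR:_RJ=J$ gives $J\cong J^*$ reflexive, $J=\m$ or $J=\overline J$ are reflexive, $xR:_RJ=\cC$ gives $\cC\cong\overline{J}^*$ and, taking trace and using \Cref{maincor:intclosref}, $\overline{J}=\cC$ hence $J=\cC$, and only the leftover case $\overline{J}=\m$ in part (2) needs minimal multiplicity via \Cref{mainprop:intclosmax}. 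Without this colon-ideal chain (or an equivalent device identifying $J^*$ inside the interval $[\cC,\m]$), your case analysis cannot close, so as written the proposal does not establish either part.
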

    \begin{proof}
        Since $R$ is analytically unramified, $\overline{R}$ is finitely generated over $R$ and hence $\cC=R:\overline{R}$ is a non-zero regular ideal of $R$. Since the residue field is infinite, we know that every proper regular ideal has a principal reduction (see \Cref{prelimsec:intclos}). By \Cref{prelimprop:traceproperties}(5), $\cC\subseteq J$ for any regular trace ideal $J$ of $R$. 

        If $\ell(R/\cC)\leq 2$, then 
the only ideals containing $\cC$ are $\cC$, $\m$ and $R$. Hence, any proper regular trace ideal is reflexive by \Cref{prelimprop:refprop}(4). 

Now assume $\ell(R/\cC)=3$. Since the residue field is infinite, we can take $xR$ to be a minimal reduction of $J$ and $\overline{J}$. By \cite[Theorem 4.6(1), Corollary 4.10]{DMS23}, we know that $J\cC=x\cC\subseteq xR$ and $\overline{J}\cC=x\cC$. Hence using \Cref{prelimprop:traceproperties}(6), we get that $\cC\subseteq xR:_R J\subseteq \tr_R(J)=J\subseteq \m\subseteq R$. Since $\ell(R/\cC)=3$, there are three possibilities: $xR:_R J=\cC$ or $xR:_R J=J$ or $J=\m$. In the latter two cases, we get the conclusion from \Cref{prelimcor:colonref} and \Cref{prelimprop:refprop}(4). Thus, assume that $xR:_R J=\cC$. 

Since we also have $\overline{J}\cC = x\cC$, we obtain that $\cC\subseteq xR:_R \overline{J}\subseteq xR:_R J.$ Thus, we get that $\cC=xR:_R \overline{J}\cong \overline{J}^*$. Taking trace on both sides and using \Cref{maincor:intclosref}, we obtain that $\cC=\overline{J}$. Hence $\cC=J=\overline{J}$  and thus $J$ is reflexive, finishing the proof of $(1)$. 

For $(2)$, observe again that we have the chain $\cC\subseteq xR:_R J\subseteq J\subseteq \overline{J}\subseteq \m$. If $\cC=xR:_R J$, then the same argument as above shows that $J=\cC$ and hence reflexive. We can also assume that $xR:_R J\subsetneq J \subsetneq \overline{J}$ as otherwise our conclusion holds from \Cref{prelimcor:colonref} and \Cref{maincor:intclosref}. Thus, we have the chain $\cC\subsetneq xR:_R J\subsetneq J\subsetneq \overline{J}\subseteq \m$. So the only possibility that remains is $\overline{J}=\m$ and we are done in this case by \Cref{mainprop:intclosmax}.
\end{proof}

The following example shows that the minimal multiplicity assumption in \Cref{mainthm:trrefalways}(2) is essential.

\begin{example}\cite[Example 7.12]{DMS23}
    Let $R=k[[t^{5},t^6,t^7]]$. Let $I=(t^5,t^7)$. Then $I$ is a non-reflexive trace ideal.
\end{example}
We now shift our focus on starting with a reflexive ideal and studying its trace. We can constrain our hypothesis a bit more to increase our colength study further. The following lemma will be important in the subsequent theorem. 

\begin{lem}\label{mainlem:chain}
    Let $R$ be an analytically unramified one dimensional non-regular local ring with infinite residue field such that $\overline{R}$ is a DVR. For any finitely generated non-zero fractional ideal $M$, let $I$ be a partial trace ideal of $M$ and let $xR$ be a minimal reduction of $I$. Then the following chain of ideals exists:
    \begin{equation}\label{eq:chain}
         \cC\subseteq xR:_R \overline{I}=xR:_R \overline{\tr_R(M)}\subseteq  xR:_R \tr_R(M)\subseteq xR:_R I \subseteq \tr_R(M) \subseteq \overline{\tr_R(M)} =\overline{I}\subseteq \m. 
    \end{equation}
\end{lem}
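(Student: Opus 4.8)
The plan is to establish the chain \eqref{eq:chain} by verifying each inclusion (or equality) from left to right, using the hypothesis that $\overline{R}$ is a DVR to control partial trace ideals via \Cref{prelimprop:partialtr}. First I would record the basic setup: since $I$ is a partial trace ideal of $M$, we have $I\subseteq \tr_R(M)$, and by \Cref{prelimprop:partialtr}(3) (which applies because $\overline{R}$ is a DVR) we get $\overline{\tr_R(M)}=\overline{I}$; this gives the equality $\overline{\tr_R(M)}=\overline{I}$ on the right end of the chain, and the inclusion $\overline{I}\subseteq\m$ is automatic since $I$ is a proper ideal and $\m$ is integrally closed (or simply since $I\subseteq\m$ and $\m=\overline{\m}$). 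The inclusion $\tr_R(M)\subseteq\overline{\tr_R(M)}$ is trivial.

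Next I would handle the colon inclusions in the middle. Since $\overline{R}$ is a DVR, $I$ is a partial trace ideal, so by \Cref{prelimprop:partialtr}(1) we have $R:I\subseteq\overline{R}$, and $\operatorname{h}(I)=\ell(R/I)$; the key point is that $I$ inherits partial-trace status so the argument in the proof of \Cref{prelimprop:partialtr}(3) shows $I\subseteq\tr_R(I)\subseteq\overline{I}$, and by \Cref{prelimprop:partialtr}(2), $\tr_R(I)=\tr_R(M)$. The chain of colons $xR:_R\overline{I}\subseteq xR:_R\tr_R(M)\subseteq xR:_R I$ then follows purely formally from the inclusion-reversing property of colons applied to $I\subseteq\tr_R(M)\subseteq\overline{\tr_R(M)}=\overline{I}$ (noted just before \Cref{mainprop:intclosmax}). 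The equality $xR:_R\overline{I}=xR:_R\overline{\tr_R(M)}$ is immediate from $\overline{I}=\overline{\tr_R(M)}$. For the leftmost inclusion $\cC\subseteq xR:_R\overline{I}$: since $\overline{I}$ is an integrally closed ideal containing $\cC$ (note $\cC\subseteq I\subseteq\overline{I}$ because $I$ is regular, using \Cref{prelimprop:traceproperties}(5), as $I$ has a principal reduction), one shows $\overline{I}=\overline{xR}=x\overline{R}\cap R$ by \Cref{mainlem:intclosred}, hence $\cC\,\overline{I}\subseteq\cC\cdot x\overline{R}=x\cC\subseteq xR$, giving $\cC\subseteq xR:_R\overline{I}$.

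The remaining inclusion, $xR:_R I\subseteq\tr_R(M)$, is the one I expect to be the main obstacle, since the other steps are either formal or direct citations. Here I would argue: by \Cref{prelimprop:traceproperties}(6), $xR:_R I\subseteq\tr_R(I)$, and then $\tr_R(I)=\tr_R(M)$ by \Cref{prelimprop:partialtr}(2) (valid since $I$ is a partial trace ideal of the fractional ideal $M$, all of whose nonzero fractional ideals have rank one). This closes the loop. It remains only to double-check compatibility of the minimal reduction: since $\overline{I}=\overline{xR}$, $x$ is simultaneously a minimal reduction of $I$ and of $\overline{I}$, so all colons $xR:_R(-)$ in the chain use the same element $x$, which is exactly what the statement asserts. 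Assembling these pieces yields \eqref{eq:chain}.
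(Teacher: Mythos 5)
Your argument is correct and follows essentially the same route as the paper: the right-hand equalities via \Cref{prelimprop:partialtr}(2),(3), the middle colon inclusions by reversing $I\subseteq \tr_R(M)\subseteq \overline{\tr_R(M)}=\overline{I}$, and $xR:_R I\subseteq \tr_R(I)=\tr_R(M)$ via \Cref{prelimprop:traceproperties}(6); your only deviation is proving $\cC\subseteq xR:_R\overline{I}$ directly from $\overline{I}=\overline{xR}=x\overline{R}\cap R$ and $\cC\overline{R}=\cC$, where the paper instead quotes the argument of \Cref{mainthm:trrefalways} (via the cited results of Dao et.\ al.), and your version is a perfectly good, slightly more self-contained substitute. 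One small misstatement: \Cref{prelimprop:traceproperties}(5) yields $\cC\subseteq \tr_R(I)$, not $\cC\subseteq I$ (a partial trace ideal need not contain $\cC$), but that parenthetical containment is never used in your computation, so nothing breaks.
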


\begin{proof}
    Since $\overline{R}$ is a $DVR$, $R$ is a Cohen-Macaulay local domain. Let $L=\tr_R(M)$. By \Cref{prelimprop:partialtr}(2) and (3), we know that $\tr_R(I)=L\subseteq \overline{L}=\overline{I}\subseteq \m$. Since $I\subseteq \tr_R(I)$, we can choose $xR$ to be a common minimal reduction of $I, L$ and $\overline{I}$. The same argument as in the proof of \Cref{mainthm:trrefalways} gives us that $\cC\subseteq xR:_R \overline{I}=xR:_R\overline{L}$. Since $I\subseteq L\subseteq \overline{L}$, we get $xR:_R \overline{L}\subseteq xR:_R L\subseteq xR:_R I\subseteq \tr_R(I)=L$ where the last inclusion follows from \Cref{prelimprop:traceproperties}(6). Now connecting these chains finishes the proof. 
\end{proof}
\begin{theorem}\label{mainthm:smallcolengthrefimpliesrefftrace}
    Let $R$ be an analytically unramified one dimensional non-regular local ring with infinite residue field such that $\overline{R}$ is a DVR . Let $I$ be a reflexive regular ideal of $R$. Then $\tr_R(I)$ is reflexive if any one of the following conditions holds.
    \begin{enumerate}
    \item $\ell(R/\cC)=4$, 
    \item $\ell(R/\cC)=5$ and $R$ is of minimal multiplicity.
    \end{enumerate} 
\end{theorem}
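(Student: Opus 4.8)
The strategy is to run a colength bookkeeping argument on the chain of ideals from \Cref{mainlem:chain}, applied with $M = I$ (a reflexive regular ideal) so that $\tr_R(I)$ plays the role of $L$. Set $J = \tr_R(I)$, choose a common minimal reduction $xR$ of $I$, $J$ and $\overline{J} = \overline{I}$, and write $A = xR :_R \overline{I}$, $B = xR :_R J$. \Cref{mainlem:chain} gives
\begin{equation*}
    \cC \subseteq A \subseteq B \subseteq J \subseteq \overline{I} \subseteq \m.
\end{equation*}
By \Cref{prelimcor:colonref} the ideal $B = xR:_R J$ is reflexive, and by \Cref{prelimprop:refprop}(4) it suffices to show $J = B$ (equivalently $\ell(R/J) = \ell(R/B)$), since then $J$ inherits reflexivity; alternatively, if $J = \overline{I}$ then $J$ is reflexive by \Cref{maincor:intclosref}, and if $J = \cC$ or $J = \m$ we are done by \Cref{prelimprop:refprop}(4). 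So the only bad case to rule out is a strict chain $\cC \subsetneq A \subseteq B \subsetneq J \subsetneq \overline{I} \subsetneq \m$.

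\textbf{Case $\ell(R/\cC) = 4$.} Here the lattice of ideals between $\cC$ and $\m$ is very thin: a strict chain $\cC \subsetneq B \subsetneq J \subsetneq \overline{I} \subsetneq \m$ forces $\ell(\m/\cC) \geq 4$, i.e. $\ell(R/\cC) \geq 5$, a contradiction. Hence at least one of the inclusions collapses, and in each sub-case the reflexivity of $J$ follows from one of \Cref{prelimcor:colonref}, \Cref{maincor:intclosref}, \Cref{prelimprop:refprop}(4) as above; the one genuinely delicate sub-case is $A = \cC$ (equivalently $\cC = xR :_R \overline{I}$), where the trick from the proof of \Cref{mainthm:trrefalways} applies: $\cC = xR:_R\overline{I} \cong \overline{I}^*$, so taking trace and using \Cref{maincor:intclosref} gives $\cC = \overline{I}$, hence $\cC = J = \overline{I}$ and $J$ is reflexive. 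I would also need the key input that $\overline{I}\,\cC = x\cC$ (so that $A \supseteq \cC$ genuinely, via \cite[Theorem 4.6(1), Corollary 4.10]{DMS23} or its analogue) — this is what makes the chain honest and is the reason $\overline{R}$ being a DVR is imposed.

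\textbf{Case $\ell(R/\cC) = 5$ with minimal multiplicity.} Now a strict chain of length $5$ between $\cC$ and $\m$ can exist, so more work is needed. The decisive extra leverage is minimal multiplicity: by \Cref{mainlem:minmult}, $xR :_R \m = \m$. After reducing (as above) to the case $\cC \subsetneq B \subsetneq J \subsetneq \overline{I} \subsetneq \m$ with all inclusions strict — which by colength count forces $\ell(R/\cC) = 5$ exactly and each quotient to have length $1$ — the integrally closed ideal $\overline{I}$ is a reflexive trace ideal containing $\cC$ with $\ell(\m/\overline{I}) = 1$, so $\overline{I}$ is maximal among proper ideals of $R$ contained in $\m$ distinct from... more precisely, the only ideal strictly between $\overline{I}$ and $\m$ is none, so $\overline{I}$ is "second-largest." I would then argue that $\overline{I} = \m$ cannot happen simultaneously with $J \subsetneq \overline{I}$ and $xR:_RJ \subsetneq J$ by invoking \Cref{mainprop:intclosmax}: since $R$ has minimal multiplicity, $\overline{I} = \m$ forces $J = \m$ (contradicting $J \subsetneq \overline{I}$). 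So $\overline{I} \subsetneq \m$, and then $\ell(R/\overline{I}) = 4$ while $\ell(R/\cC) = 5$, i.e. $\ell(\overline{I}/\cC) = 1$; combined with the strict chain $\cC \subsetneq B \subsetneq J \subsetneq \overline{I}$ this forces $\ell(\overline{I}/\cC) \geq 3$, again a contradiction. The remaining sub-case $A = \cC$ is handled exactly as before to get $\cC = \overline{I}$, hence $J = \cC$ is reflexive.

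\textbf{Main obstacle.} The routine part is the colength arithmetic; the subtle part is verifying that the chain in \Cref{mainlem:chain} really does have $\cC$ at the bottom with $xR:_R\overline{I} \supseteq \cC$ \emph{and} that the "collapse" of a link always lands us in a situation covered by one of the quoted reflexivity results — in particular making sure the case $\overline{I} = \m$ together with minimal multiplicity is correctly routed through \Cref{mainprop:intclosmax}, and that the $A = \cC$ argument (the $\overline{I}^* \cong \cC$ trick) is the \emph{only} way a priori to get $J$ strictly between $\cC$ and $\overline{I}$ while keeping $xR:_RJ = \cC$. Pinning down why $\overline{R}$ being a DVR (rather than merely analytically unramified) is used — it is needed for \Cref{prelimprop:partialtr}(3), i.e. $\overline{\tr_R(I)} = \overline{I}$, which is the linchpin identifying the top of the chain — is where I expect the real care to be required.
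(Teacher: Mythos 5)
There is a genuine gap, and it comes from the chain you chose. You work with $\cC \subseteq A \subseteq B \subseteq J \subseteq \overline{I} \subseteq \m$ where $B = xR:_R J = xR:_R \tr_R(I)$, i.e.\ you drop the term $xR:_R I$ that \Cref{mainlem:chain} provides. As a result your case analysis never uses the hypothesis that $I$ is reflexive, and the pigeonhole leaves collapses you cannot handle. In case (1) the possible equalities in your chain are $\cC=A$, $A=B$, $B=J$, $J=\overline{I}$, $\overline{I}=\m$; you treat the first and the middle two, but not $A=B$ (which only gives $\overline{J}^{\,*}=J^*$, useless unless you already know $J$ is reflexive) and not $\overline{I}=\m$. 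The configuration $\cC\subsetneq B\subsetneq J\subsetneq \overline{J}=\m$ is numerically consistent with $\ell(\m/\cC)=3$ and really occurs for trace ideals: in $k[[t^5,t^6,t^7]]$ one has $\ell(R/\cC)=4$ and the non-reflexive trace ideal $J=(t^5,t^7)$ with $\overline{J}=\m$. So an argument that, like yours, only uses that $J$ is a regular trace ideal cannot close case (1); the reflexivity of $I$ must enter. In case (2) your attempted exclusion of the all-strict chain is a non sequitur: nothing forces $\ell(R/\overline{I})=4$ once $\overline{I}\subsetneq\m$, and the chain $\cC\subsetneq B\subsetneq J\subsetneq\overline{J}\subsetneq\m$ (or $\cC\subsetneq A=B\subsetneq J\subsetneq\overline{J}\subsetneq\m$) fits exactly into $\ell(\m/\cC)=4$, so counting alone does not yield a contradiction.

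The paper's proof avoids this precisely by keeping $xR:_R I$ in the chain: for (1) it uses $\cC\subseteq xR:_R\overline{I}\subseteq xR:_R I\subseteq L\subseteq\m$ (four links against colength $3$), and each possible equality is productive. The equality $xR:_R\overline{I}=xR:_R I$ gives $x\overline{I}^{\,*}=xI^*$, hence $I=\overline{I}$ because \emph{both} $I$ (by hypothesis) and $\overline{I}$ (by \Cref{mainprop:intclosref}, after first replacing $I$ by a partial trace ideal of itself so that $R:I\subseteq\overline{R}$ and reflexivity gives $\cC\subseteq I$) are reflexive; the equality $xR:_R I=L$ gives $L\cong I^*$, reflexive by \Cref{prelimprop:refprop}(1). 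For (2) the same chain augmented by $\overline{L}$ has five links against colength $4$, and the only residual cases are $L=\overline{L}$ and $\overline{L}=\m$, handled by \Cref{maincor:intclosref} and \Cref{mainprop:intclosmax}. So your plan needs two repairs: reinstate $xR:_R I$ (and with it the use of reflexivity of $I$ and of $\overline{I}$), and carry out the preliminary replacement of $I$ by a partial trace ideal of itself, which is what legitimizes $\overline{\tr_R(I)}=\overline{I}$ and $\cC\subseteq I$ in the first place. As written, the unhandled collapses $A=B$ and (in case (1)) $\overline{J}=\m$ leave the proof incomplete.
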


\begin{proof}
    Let $L=\tr_R(I)$. Since $\tr_R(I)$ and reflexivity are invariant under the isomorphism class of $I$, we can replace $I$ to be a partial trace ideal of itself. By \Cref{prelimprop:partialtr}(1), we get that $R:I\subseteq \overline{R}$ and since $I$ is reflexive, using \Cref{prelimprop:refprop}(3), we get that $\cC=R:\overline{R}\subseteq R:(R:I)=I$. Hence, by \Cref{mainprop:intclosref}, $\overline{I}$ is a reflexive trace ideal. Moreover, from \Cref{prelimprop:partialtr}(3), we get that $L=\tr_R(I)\subseteq \overline{L}=\overline{I}$.
    
For $(1)$, start with the following chain from \Cref{eq:chain}: $\cC\subseteq xR:_R \overline{I}\subseteq xR:_R I\subseteq L\subseteq \m$. Since $\ell(\m/\cC)=3$, we get that at least one of the inclusions is an equality. If $\cC=xR:_R \overline{I}$, then $\cC\cong \overline{I}^*$. Taking trace, we conclude that $\cC=\overline{I}$ using \Cref{mainprop:intclosref} and \Cref{prelimprop:traceproperties}(2). But this shows that $\cC=I=L$ and hence $L$ is reflexive. Next, let $xR:_R \overline{I}=xR:_R I$. By \cite[Remark 2.4]{DMS23}, we get that $x\overline{I}^*=xI^*$, but this means that $\overline{I}=I$ since $I$ and $\overline{I}$ are both reflexive. But now from $I\subseteq L\subseteq \overline{I}$, we conclude that $L=I=\overline{I}$ and hence $L$ is reflexive. If $L=\m$, we are done by \Cref{prelimprop:refprop}(4).
Finally, if $xR:_R {I}=L$, then $L\cong {I}^*$ and hence is reflexive by \Cref{prelimprop:refprop}(1). 
This finishes the proof of $(1)$. 

For $(2)$, from \Cref{eq:chain} we now start with \begin{equation}\label{eqref}\cC\subseteq xR:_R \overline{I}\subseteq xR:_ R I\subseteq L\subseteq \overline{L}\subseteq \m.\end{equation} 
Since $\ell(\m/\cC)=4$, we need at least one inclusion to be an equality.  From the arguments in the previous paragraph, we may assume that $L=\overline{L}$ or $\overline{L}=\m$. In the first case, \Cref{maincor:intclosref} settles the problem. In the latter case, we obtain that $L=\m$ by \Cref{mainprop:intclosmax}. This completes the proof.
\end{proof}

\section{A Counterexample to \Cref{mainques}}\label{counterexample}

In this section, we provide an example to show the necessity of minimal multiplicity in \Cref{mainthm:smallcolengthrefimpliesrefftrace}. More importantly, this provides a counterexample to \Cref{mainques}. Our counterexample arises in the context of studying complete  numerical semigroup rings which are one dimensional local domains of the form $k[[t^{a_1},t^{a_2},\ldots, t^{a_n}]]$ where $k$ is a field and $ a_1<\ldots<a_n, a_i\in \mathbb{N}.$ In this case, $\overline{R}=k[[t]]$ which is a $DVR$ and the conductor ideal $\cC$ can be described as the collection of all $t^{c+i}, i\geq 0$ where $c$ is such that $t^{c-1}\notin R$ but $t^{c+i}\in R$ for all $i\geq 0$. The value group of $R$, denoted $v(R)$ is the collection of all integers $r$ such that $t^r\in R$. More generally, for a fractional ideal $I$, the value group of $I$ is defined to be $v(I):=\{r\in \mathbb{Z}\mid t^r\in I\}$. Notice that $v(\m)=v(R)$. 

A minimal reduction of the maximal ideal $(t^{a_1},\ldots, t^{a_n})$ is given by $t^{a_1}$ \cite[Remark 2.1]{maitra2023extremal}. The advantage of working on these rings is that a lot of information about ideals is encoded in the associated  valuation semigroup of the ring. Thus, often the study boils down to studying the semigroup generated by the positive integers $a_1,\ldots, a_n$ . We refer the reader to the discussion in \cite[Sec 3.3]{lyle2024annihilators}; most importantly, every homogeneous element is essentially of the form $t^h$ and hence to detect the presence of such an element in a desired subset of elements of $R$, it is enough to detect the presence of the integer $h$ in the corresponding suitable semigroup of integers associated with the subset.

\begin{lem}\label{lem:scalarandcolonproperty}
    Let $R$ be an integral domain with fraction field $Q$. For any non-zero $\alpha\in Q$ and any fractional ideal $L$, $\ds \alpha(R: L)=R:\frac{1}{\alpha}L.$
\end{lem}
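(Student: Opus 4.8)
The plan is to prove the set equality $\alpha(R:L) = R : \tfrac{1}{\alpha}L$ by the usual two-way inclusion, unwinding the definition of the colon of fractional ideals: recall that $R:L = \{\beta \in Q \mid \beta L \subseteq R\}$, and since $\alpha \neq 0$ is a unit in $Q$, multiplication by $\alpha$ is a bijection of $Q$ onto itself. So first I would take an arbitrary element of the left-hand side, write it as $\alpha\beta$ with $\beta L \subseteq R$, and check that $(\alpha\beta)\cdot\tfrac{1}{\alpha}L = \beta L \subseteq R$, which shows $\alpha\beta \in R : \tfrac{1}{\alpha}L$.

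For the reverse inclusion I would take $\gamma \in R : \tfrac{1}{\alpha}L$, so $\tfrac{\gamma}{\alpha}L \subseteq R$, and observe that this says exactly $\tfrac{\gamma}{\alpha} \in R:L$; hence $\gamma = \alpha\cdot\tfrac{\gamma}{\alpha} \in \alpha(R:L)$. That closes the loop. The argument is genuinely just a formal manipulation of the definition together with the invertibility of $\alpha$ in $Q$, so there is no real obstacle — the only thing to be careful about is that everything takes place inside $Q$ (so that $\tfrac{1}{\alpha}$ makes sense as a fractional-ideal scalar) and that the fractional ideals in question are $R$-submodules of $Q$, which is exactly the setting fixed in the preliminaries.

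\begin{proof}
Both sides are $R$-submodules of $Q$, and since $\alpha \in Q$ is nonzero it is invertible in $Q$, so multiplication by $\alpha$ and by $\tfrac1\alpha$ are mutually inverse bijections of $Q$.

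Suppose $\beta \in R:L$, i.e.\ $\beta L \subseteq R$. Then $(\alpha\beta)\bigl(\tfrac1\alpha L\bigr) = \beta L \subseteq R$, so $\alpha\beta \in R:\tfrac1\alpha L$. This shows $\alpha(R:L) \subseteq R:\tfrac1\alpha L$.

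Conversely, suppose $\gamma \in R:\tfrac1\alpha L$, i.e.\ $\tfrac{\gamma}{\alpha} L = \gamma\bigl(\tfrac1\alpha L\bigr) \subseteq R$. Then $\tfrac{\gamma}{\alpha} \in R:L$, and hence $\gamma = \alpha\cdot\tfrac{\gamma}{\alpha} \in \alpha(R:L)$. This gives the reverse inclusion $R:\tfrac1\alpha L \subseteq \alpha(R:L)$, and the two inclusions together yield the claimed equality.
\end{proof}
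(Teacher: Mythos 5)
Your proof is correct and follows essentially the same route as the paper's, which verifies the equality by unwinding the definition of the colon and using that $\alpha$ is invertible in $Q$ (the paper phrases it as a single chain of equivalences, you as two inclusions — same content). Nothing to add.
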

\begin{proof}
    Notice that $x\in \alpha(R:L)\iff \frac{x}{\alpha} L\subseteq R \iff \frac{x}{\alpha}\in R:L \iff x\in \alpha(R:L)$.
\end{proof}
\begin{theorem}\label{main:counterexample}
    Let $\ds R=k[[t^7, t^8, t^9, t^{11}]]$ where $k$ is infinite. Let $x_1=t^7,x_2=t^8,x_3=t^9$ and $x_4=t^{11}$. Let $\cC$ be the conductor ideal. Then the following statements hold.

    \begin{enumerate}
    \item $R$ is not of minimal multiplicity and $\ell(R/\cC)=5$.
        \item The ideal $I=(x_2,x_3,x_1^3)$ is reflexive.
        \item $\tr_R(I)=(x_1,x_2,x_3)$.
        \item $\tr_R(I)$ is not a reflexive ideal.
    \end{enumerate}
    In particular, \Cref{mainques} is false in general.
\end{theorem}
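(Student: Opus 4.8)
The plan is to work entirely inside the numerical semigroup ring $R=k[[t^7,t^8,t^9,t^{11}]]$ and translate every claim into a statement about the numerical semigroup $S=\langle 7,8,9,11\rangle$ and the value groups of the relevant fractional ideals, as described in the paragraph preceding \Cref{lem:scalarandcolonproperty}. First I would record the semigroup explicitly: $S=\{0,7,8,9,11,14,15,16,17,18,19,20,21,22,\ldots\}$, so the gaps are $\{1,2,3,4,5,6,10,12,13\}$, the conductor exponent is $c=14$, and hence $\cC=(t^{14},t^{15},t^{16},\ldots)$ with $v(\cC)=S_{\geq 14}$. This immediately gives part (1): $\ell(R/\cC)$ equals the number of elements of $v(R)\cap[0,13]=\{0,7,8,9,11\}$, which is $5$; and $R$ is not of minimal multiplicity because $e(R)=a_1=7$ while $\mu(\m)+\dim R-1=4+1-1=4\neq 7$ (equivalently one checks $\m^2\neq t^7\m$ at the semigroup level, e.g. $16\in v(\m^2)$ but $16-7=9\notin v(\m)+\text{(nonzero)}$... more carefully, $16\in 2\m$-values but one sees $v(\m^2)=\{14,15,16,\dots\}\neq 7+v(\m)=\{14,15,16,18,\dots\}$ since $17\in v(\m^2)$ as $8+9=17$ but $17-7=10\notin v(\m)$).

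For part (2), the cleanest route is \Cref{prelimprop:refprop}(3): show $I=R:(R:I)$. I would compute $v(I)$ for $I=(t^8,t^9,t^{21})$, namely $v(I)=\{8,9,16,17,18,20,21,22,\ldots\}$ — i.e. $(8+S)\cup(9+S)\cup(21+S)$ — then compute $v(R:I)=\{r\in\Z: r+v(I)\subseteq S\}$ by a finite check (it suffices to check against the generators $8,9,21$ and their consequences up to the conductor), and finally compute $v(R:(R:I))$ the same way, verifying it equals $v(I)$. Since a fractional ideal of a numerical semigroup ring is determined by its value set, and reflexivity of $I$ is detected on value sets via \Cref{prelimprop:refprop}(3) together with \Cref{prelimprop:refprop}(2) (the colon ideals $R:I$ and $R:(R:I)$ are themselves fractional ideals, i.e.\ of the form $t^{-N}(\text{ideal})$), this finite computation settles (2). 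I expect the main obstacle here: the colon computations must be done carefully because $v(R:I)$ need not be ``$S$ shifted'' — it is $\{r: r+8, r+9, r+21\in S \text{ and } r+\text{(every element of }v(I))\in S\}$, and one must be careful that checking the three generators together with closure under adding elements of $S$ is enough, which it is since $v(I)=\bigcup_j(g_j+S)$.

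For part (3), I would use \Cref{prelimprop:traceproperties}(3): $\tr_R(I)=(R:I)I$, so once $v(R:I)$ is known I compute $v((R:I)I)$ as the set of all sums $a+b$ with $a\in v(R:I)$, $b\in v(I)$, intersected appropriately — really $v((R:I)I)$ is generated as an $S$-module by the pairwise sums of generators. The claim is that this value set equals $v((t^7,t^8,t^9))=\{7,8,9,14,15,16,17,18,\ldots\}=(7+S)\cup(8+S)\cup(9+S)$; in particular it must contain $7$, so $R:I$ must contain an element of value $-1$ (i.e.\ $t^{-1}(R:I)$-type behavior), which is consistent with $I$ being ``close to'' $\m^{?}$. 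I would double-check $7\in v(\tr_R(I))$ directly: we need $\alpha\in R:I$ with $v(\alpha\cdot t^{8})$ or $v(\alpha t^9)$ or $v(\alpha t^{21})$ equal to $7$, e.g.\ $v(\alpha)=-1$ works if $t^{-1}I\subseteq R$, i.e.\ $7,8,20\in S$ — true — and more elements of $v(I)$ shifted by $-1$ land in $S$; this needs the full finite check. Then, for part (4), since $\tr_R(I)=(t^7,t^8,t^9)=:J$ with $v(J)=S\cup\{7\}$... wait, $v(J)=(7+S)\cup(8+S)\cup(9+S)=\{7,8,9,14,15,\ldots\}$, I show $J$ is not reflexive by exhibiting $R:(R:J)\supsetneq J$: compute $v(R:J)$, then $v(R:(R:J))$, and check the latter strictly contains $v(J)$ — I expect it to pick up the exponent $11$ (note $t^{11}\notin J$ since $11\notin v(J)$, as $11-7=4,11-8=3,11-9=2$ are all gaps, yet $11$ plausibly reappears in the double dual), giving a contradiction with reflexivity via \Cref{prelimprop:refprop}(3). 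The ``In particular'' clause then follows immediately: $I$ is a reflexive ideal whose trace $\tr_R(I)$ is not reflexive, so the answer to \Cref{mainques} is negative. The genuinely delicate step is (4): verifying that the double dual of $J=(t^7,t^8,t^9)$ is strictly larger than $J$, since this is where a numerical accident of the specific semigroup $\langle 7,8,9,11\rangle$ is doing all the work, and one must present the colon computations transparently enough to be checkable.
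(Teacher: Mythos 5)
Your proposal is correct and takes essentially the same route as the paper: everything is reduced to value-set computations in $S=\langle 7,8,9,11\rangle$, with reflexivity tested through \Cref{prelimprop:refprop}(3) and the trace computed as $(R:I)I$ via \Cref{prelimprop:traceproperties}(3); the only real variations are that you get non-minimal multiplicity from $e(R)=7\neq 4=\mu(\m)$ rather than from \Cref{mainlem:minmult}, and you verify $I=R:(R:I)$ directly instead of exhibiting $I$ as the double dual of $(t^8,t^9)$ as the paper does. One slip to fix: your explicit enumeration $v(I)=\{8,9,16,17,18,20,21,22,\ldots\}$ is wrong, since $15=8+7$ and $19=8+11$ lie in $8+S$; the correct set is $\{8,9\}\cup\{15,16,17,\ldots\}$ (your formula $(8+S)\cup(9+S)\cup(21+S)$ is the right one, only the listing slipped). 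The computation you defer in (4) does go through exactly as you predict: $v\bigl(R:(t^7,t^8,t^9)\bigr)=\{0\}\cup\{7,8,9,\ldots\}$, so $t^{11}\cdot\bigl(R:(t^7,t^8,t^9)\bigr)\subseteq R$, giving $11\in v\bigl(R:(R:(t^7,t^8,t^9))\bigr)$ while $11\notin v\bigl((t^7,t^8,t^9)\bigr)$, hence $\tr_R(I)$ is not reflexive.
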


\begin{proof}
    Notice that $x_1R:_R \m$ is given by all elements of the form $t^h\in R$ such that $t^h\m\subseteq x_1R$. In terms of valuations, this is the same as studying whether $h+r-7\in v(\m)$ for all $r\in v(\m)$. However, observe that $x_2\in x_1R:_R\m$ if and only if $8+r-7\in v(\m)$ for all $r$, which is a contradiction since $r=9\in v(\m)$ but $10\notin v(\m)$. Thus, $x_2\notin x_1R:_R \m$ which shows that $x_1R:_R \m \subsetneq \m$ and hence $R$ is not of minimal multiplicity by \Cref{mainlem:minmult}. 

    Now notice that $t^{13}\notin R$ but $t^{14+i}\in R$ for all $i\geq 0$. Hence $v(\cC)=\{14, 15, \longrightarrow\}$. Moreover, $v(R)=\{0,7,8,9,11,14,15,\longrightarrow\}$. Thus, $\ell(R/\cC)=5$, being given by the valuations $\{0,7,8,9,11\}$ \cite[Proposition 2.2]{maitra2023extremal} (c.f. \cite[Proposition 2.9]{HK1971}). This finishes the proof of $(1)$.

        For (2), we begin by showing that $I$ can be realized as the double dual $J^{**}$ where $J=(x_2,x_3)=(t^8,t^9)$.  Since $J=x_2R+x_3R$, we get that $J^*=R:J=(R:x_2R)\cap (R:x_3R)=\left(\frac{1}{x_2}(R:R)\right)\cap \left(\frac{1}{x_3}(R:R)\right)$ where the last equality follows from \Cref{lem:scalarandcolonproperty}. Hence, $v(J^*)$ is given by $v(t^{-8}R)\cap v(t^{-9}R)$. Since we know $v(R)$, we get that $v(t^{-8}R)=\{r-8\mid r\in v(R)\}$ and $v(t^{-9}R)=\{r-9\mid r\in v(R)\}$. The intersection is easily seen to be $\{-1,0,6,7,8,\longrightarrow\}$. Notice that using $\{t^{-1},t^{0},t^{12}\}$ as $R$-module generators, all other elements of the form $t^j$, with $j$ in the above intersection, can be generated. Hence, $J^*=t^{-1}R+R+t^{12}R$.

        
        Next, we compute $J^{**}=R:J^{*}=(R:t^{-1}R)\cap (R:R)\cap (R:t^{12}R)=(t(R:R))\cap R\cap (t^{-12}(R:R))=tR\cap R\cap t^{-12}R$, where the second to last inequality again follows from \Cref{lem:scalarandcolonproperty}. This is the same as looking for the valuations in $v(tR)\cap v(R)\cap v(t^{-12}R)$. Now observe that $v(tR)=\{1,8,9,10,12,15,\longrightarrow\}$ and $v(t^{-12}R)=\{-12,-5,-4,-3,-1,2,
        \longrightarrow\}$. Thus,   $v(J^{**})=v(tR)\cap v(R)\cap v(t^{12}R)=\{8,9,15,\longrightarrow\}$. Finally, observe that $\{t^8,t^9,t^{21}\}$ as $R$-module generators, generates all the remaining elements whose valuations are in the intersection. Thus, $J^{**}=(t^8,t^9,t^{21})=(x_2,x_3,x_1^3)$ as desired. This shows that $I$ is a reflexive ideal, finishing the proof of (2).

    To see (3), we first observe that $(x_2,x_3)\subseteq \tr_R(I)$ by \Cref{prelimprop:traceproperties}(4). Further notice that $x_1x_3=t^{16}=x_2^2 \in x_2R$ and $x_1x_1^3=t^{28}=x_2x_3x_4\in x_2R$. Since $x_2R:_R I=(x_2R:_R x_2R)\cap (x_2R:_Rx_3R)\cap(x_2R:_Rx_1^3R)=(x_2R:_Rx_3R)\cap(x_2R:_Rx_1^3R)$, we conclude that $x_1\in x_2R:_R I \subseteq \tr_R(I)$ where the last inclusion is from \Cref{prelimprop:traceproperties}(6). Thus, we get $(x_1,x_2,x_3)\subseteq \tr_R(I)$. Next notice that $\{t^{14},t^{15},\longrightarrow\}\subseteq (x_1,x_2,x_3)$. So, $(x_1,x_2,x_3) = \tr_R(I)$ if and only if $x_4\notin \tr_R(I)$.


    From \Cref{prelimprop:traceproperties}(3), we know that $\tr_R(I)=I^*I$. We first compute $I^*$ using the procedure as in the proof of (2): $I^*=(R:x_2R)\cap (R:x_3R)\cap (R:x_1^3R)=t^{-8}R\cap t^{-9}R\cap t^{-21}R$. Since $v(t^{-21}R)=\{-21,-14,-13,-12,-10,-7,\longrightarrow\}$, we find that the intersection of the valuations is given by $\{-1,0,6,7,\longrightarrow\}$. Hence $I^*=t^{-1}R+R+t^{12}R$ using the same argument as before. So, $\tr_R(I)=I^*I=(t^{-1}R+R+t^{12}R)(t^{8}R+t^{9}R+t^{21}R)$ and none of the combinations give rise to the valuation $11$ since $t^{12}\notin R$. Thus $x_4\notin \tr_R(I)$ proving (3).


    
    Finally, we claim that $x_4\in \tr_R(I)^{**}$.  
Let $L=\tr_R(I)=(x_1,x_2,x_3)$. Then $L^{*}=x_1^{-1}R\cap x_2^{-1}R\cap x_3^{-1}R$. Again, observing the valuations, we obtain that $L^{*}=R+\sum_{i\geq 7}t^iR$. Looking at the generators, we conclude that $x_4L=t^{11}L\subseteq R$, hence $x_4\in L^{**}$. But $x_4\not \in L$. Thus, $\tr_R(I)$ is not reflexive by \Cref{prelimprop:refprop}(3), and this finishes the proof of (4).
\end{proof}

The previous example shows that without the minimal multiplicity assumption, \Cref{mainques} is false. However, our tools do not seem to answer the case when $R$ is of minimal multiplicity. As such, we end this section with the following question.

\begin{quest}
    Let $R$ be a one dimensional Cohen Macaulay ring of minimal multiplicity. If $I\subseteq R$ is a reflexive ideal, is $\tr_R(I)$ reflexive? 
\end{quest}

\section{Miscellaneous Results}\label{misc}
In this section, we provide some miscellaneous results that provide  further tools to study the relationship between a trace ideal and its double dual. We start by providing a more general version of \Cref{prelimprop:refprop}(1) by relaxing the hypothesis. Recall that a Noetherian ring $R$ satisfies Serre's criterion $(S_k)$ if for all prime ideals $P$ in $R$, the depth of $R_P$ is at least $\min\{k,\operatorname{height} P\}$ (see \cite[Sec 4.5]{HS06}) for further details). Also, recall that a module $M$ is called totally reflexive if $M$ is reflexive and $\Ext^i_R(M,R)=\Ext^i_R(M^*,R)=0$ for all $i>0$ (refer to \cite{kustin2021totally} for instance). 

\begin{prop}\label{lem:dualreflexive}
    Let $R$ be a reduced local Cohen-Macaulay ring or a ring satisfying $(S_1)$. Let $M$ be a finitely generated module over $R$. If $R$ only satisfies $(S_1)$ then further assume that $M$ and $M^*$ are locally totally reflexive on all the minimal primes of $R$. Then $\displaystyle \tr_R(M^*)=\tr_R(M^{**})$.
\end{prop}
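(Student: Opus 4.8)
The plan is to reduce the statement to the single assertion that $M^{*}$ is reflexive, and then deduce the trace equality formally. Applying \Cref{prelimprop:traceproperties}(2) (which is \cite[Proposition 2.8(v)]{lindo2017trace} and holds over any commutative ring) to the module $M^{*}$ gives the inclusion $\tr_{R}(M^{*})\subseteq\tr_{R}((M^{*})^{*})=\tr_{R}(M^{**})$ unconditionally, and promotes it to an equality precisely when $M^{*}$ is reflexive. So the whole content of the proposition is that $M^{*}$ is reflexive, which is also the announced strengthening of \Cref{prelimprop:refprop}(1).

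To prove $M^{*}$ reflexive I would use the triangle identity $(\Phi_{M})^{*}\circ\Phi_{M^{*}}=\id_{M^{*}}$: it makes $\Phi_{M^{*}}\colon M^{*}\to M^{***}$ a split monomorphism, so $M^{*}$ is reflexive if and only if $C:=\coker(\Phi_{M^{*}})$ vanishes, and, being the cokernel of a split mono, $C$ is a direct summand of $M^{***}$. The idea is then the classical torsion/torsion-free dichotomy: show $C$ is a torsion $R$-module and also torsion-free, hence zero. Torsion-freeness is immediate, since $M^{***}=(M^{**})^{*}$ is a dual module, hence torsionless, hence torsion-free, and therefore so is its direct summand $C$.

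The substance is showing $C$ is torsion, i.e. $C\otimes_{R}Q=0$, where $Q$ is the total ring of fractions of $R$. Since taking $R$-duals commutes with localization for finitely generated modules over a Noetherian ring, and the biduality map is compatible with this localization, $C\otimes_{R}Q\cong\coker\!\big(\Phi^{Q}_{(M\otimes_{R}Q)^{*}}\big)$; so it is enough to know that $(M\otimes_{R}Q)^{*}$ is a reflexive $Q$-module. Here the hypotheses do the work. Because $R$ is reduced, or satisfies $(S_{1})$, its associated primes coincide with its minimal primes $\p_{1},\dots,\p_{r}$, so $Q$ is an Artinian semilocal ring and $Q\cong\prod_{i}R_{\p_{i}}$; reflexivity over $Q$ thus decouples into reflexivity of each $(M_{\p_{i}})^{*}$ over $R_{\p_{i}}$. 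In the reduced Cohen-Macaulay case each $R_{\p_{i}}$ is a field, so there is nothing to prove; in the $(S_{1})$ case each $R_{\p_{i}}$ is merely Artinian local, and it is exactly here that the auxiliary assumption is used: $M_{\p_{i}}$ is totally reflexive by hypothesis, hence so is $(M_{\p_{i}})^{*}$ (total reflexivity is self-dual), hence $(M_{\p_{i}})^{*}$ is reflexive. In either case $C\otimes_{R}Q=0$, so $C=0$, $M^{*}$ is reflexive, and the trace equality follows.

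The main obstacle is the torsion claim for $C$, and within it the analysis at the minimal primes: over an Artinian local ring that fails to be Gorenstein, the dual of a finitely generated module need not be reflexive, so the behaviour at the minimal primes genuinely requires an input --- reducedness, which turns those localizations into fields, in the first case, and the stipulated local total reflexivity of $M$ (equivalently of $M^{*}$) in the second. A minor technical point to handle carefully is the compatibility of the biduality map $N\mapsto(N\to N^{**})$ with localization, which is what legitimizes the reduction to $Q$ and then to the $R_{\p_{i}}$; alternatively, one may package $C$ as $\Ext^{2}_{R}(\Tr(M^{*}),R)$ via the Auslander-Bridger fundamental exact sequence and argue that this Ext module is torsion.
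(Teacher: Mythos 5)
Your proposal is correct and follows the same two-step skeleton as the paper's own proof: everything is reduced to the reflexivity of $M^{*}$, and the trace equality is then extracted by applying \Cref{prelimprop:traceproperties}(2) to the module $M^{*}$. The only difference is that where the paper simply cites \cite[Lemma 2.5]{DMS23} and \cite[Proposition 2.2]{F19} for the reflexivity of $M^{*}$, you prove it directly — via the split monomorphism $\Phi_{M^{*}}$, torsion-freeness of the cokernel as a summand of the dual $M^{***}$, and vanishing after base change to the total ring of fractions $Q\cong\prod_i R_{\mathfrak{p}_i}$ — and that argument is sound under the stated hypotheses (reducedness or $(S_1)$ makes $Q$ an Artinian product of the localizations at minimal primes, with the locally-totally-reflexive assumption handling the non-reduced $(S_1)$ case).
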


\begin{proof}
    Under the given hypotheses, $M^*$ is reflexive (see \cite[Lemma 2.5]{DMS23} and \cite[Proposition 2.2]{F19}). Now the conclusion follows from \cite[Proposition 2.8]{lindo2017trace}.
\end{proof}

\begin{prop}\label{prop:birexttrace}
    Let $S$ be a finite birational extension of $R$ where $R$ is a reduced local Cohen-Macaulay ring. Then 
$\tr_R(S)=\tr_R(S^*)=\tr_R(S^{**})=S^{*}$.
\end{prop}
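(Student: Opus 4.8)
\textbf{Proof proposal for Proposition~\ref{prop:birexttrace}.}

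The plan is to first establish the identity $\tr_R(S) = S^*$, and then leverage reflexivity of $S$ together with Proposition~\ref{lem:dualreflexive} to collapse the remaining terms in the chain. Since $S$ is a finite birational extension of $R$, it is a regular (in fact, faithful) fractional ideal of $R$ containing the regular element $1$; so by Proposition~\ref{prelimprop:traceproperties}(3) we have $\tr_R(S) = (R:S)\,S$. Now the point is that $R:S$ is itself closed under multiplication by $S$: if $\alpha \in R:S$, then $\alpha S \subseteq R$, and since $S$ is a ring, $(\alpha S)S \subseteq \alpha S \subseteq R$, so $\alpha s \in R:S$ for every $s \in S$. Hence $(R:S)S = R:S$, which gives $\tr_R(S) = R:S = \Hom_R(S,R) = S^*$, using Proposition~\ref{prelimprop:refprop}(2) for the identification of $R:S$ with $S^*$. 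This already pins down both $\tr_R(S)$ and $S^*$.

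Next I would argue that $S$ is reflexive as an $R$-module. One clean way: since $S$ is a finite birational extension, $\cC_R(S) = R:S$ is a regular ideal, and $S^* \cong R:S$ is an ideal of $R$; by Corollary~\ref{prelimcor:colonref}, or rather directly, $R:S$ is reflexive because it is the colon $xR:_R I$ up to the standard scaling identification (Proposition~\ref{prelimprop:refprop}(1),(2)). Actually the simplest route is to note $S$ is a finitely generated $R$-submodule of $Q$ that is $S$-stable, hence $S \cong \Hom_R(S,S)$, which over a reduced Cohen-Macaulay ring forces reflexivity of $S$ (see \cite[Lemma 2.5]{DMS23}); alternatively, $S^* = S:S = S$ would be circular, so I will instead invoke that $S^*$ is reflexive by Proposition~\ref{prelimprop:refprop}(1) and then show $S \to S^{**}$ is an isomorphism by a direct computation of $S^{**} = R:(R:S) = R:S^* = R:(R:S)$ and checking this equals $S$ using that $S$ is a ring birational over $R$ (so $R:(R:S) \subseteq S$ always, and $\supseteq$ since $S(R:S) = R:S \subseteq R$ gives $S \subseteq R:(R:S)$). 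Thus $S$ is reflexive.

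With $S$ reflexive, Proposition~\ref{prelimprop:traceproperties}(2) gives $\tr_R(S) = \tr_R(S^*)$, and Proposition~\ref{lem:dualreflexive} (whose hypotheses are met since $R$ is reduced local Cohen-Macaulay) gives $\tr_R(S^*) = \tr_R(S^{**})$. Combining with $\tr_R(S) = S^*$ from the first step yields the full chain $\tr_R(S) = \tr_R(S^*) = \tr_R(S^{**}) = S^*$. The main obstacle I anticipate is the bookkeeping around the fractional-ideal identifications: making sure the isomorphism $\Hom_R(S,R) \cong R:S$ and the equality $R:(R:S) = S$ are applied with the correct choice of regular element and that $S$ genuinely lies inside $Q$ as a fractional ideal so that Proposition~\ref{prelimprop:traceproperties}(3) applies; none of this is deep, but it requires care to avoid a circular derivation of reflexivity. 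Everything else is a short chain of the cited results.
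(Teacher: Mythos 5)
Your first step is correct and is exactly the paper's: since $1\in S$ is a regular element, $\tr_R(S)=(R:S)S$, and $R:S=\cC_R(S)$ is an $S$-module, so $(R:S)S=R:S$ and hence $\tr_R(S)=R:S=S^*$ as an equality of ideals of $R$. The genuine gap is in your middle paragraph: a finite birational extension $S$ need \emph{not} be reflexive as an $R$-module, and both of your justifications fail. The asserted inclusion $R:(R:S)\subseteq S$ is not true ``always'': take $R=k[[t^4,t^5,t^6,t^7]]$ and $S=k[[t^2,t^5]]$, a finite birational extension of $R$. Then $R:S=t^4k[[t]]$, which is also the conductor $R:\overline{R}$, so $S^{**}=R:(R:S)=R:(R:\overline{R})=\overline{R}=k[[t]]\supsetneq S$; thus $S$ is not reflexive, even though $S=\Hom_R(S,S)$. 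The same example shows that your fallback (``$S\cong\Hom_R(S,S)$ forces reflexivity over a reduced Cohen--Macaulay ring'') is also false; \cite[Lemma 2.5]{DMS23} concerns reflexivity of duals $M^*$, not of $S$-stable fractional ideals. Consequently the link in your chain that invokes \Cref{prelimprop:traceproperties}(2), which needs $S$ reflexive in order to conclude $\tr_R(S)=\tr_R(S^*)$, does not hold as written.

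The conclusion itself survives because that link can be restored without any reflexivity of $S$, and this is the paper's route: step one gives the literal equality of ideals $\tr_R(S)=S^*=\cC_R(S)$, and $\tr_R(S)$ is a regular trace ideal ($\cC_R(S)$ is regular since $S$ is module-finite over $R$), hence it is fixed by taking trace (\Cref{prelimprop:traceproperties}(4)); applying $\tr_R(\cdot)$ to the equality gives $\tr_R(S^*)=\tr_R(\tr_R(S))=\tr_R(S)$. The final equality $\tr_R(S^*)=\tr_R(S^{**})$ via \Cref{lem:dualreflexive} is fine, since that proposition only needs the dual $M^*=S^*$ to be reflexive, which always holds. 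One small bookkeeping point: \Cref{prelimprop:traceproperties}(3) is stated in the paper under a one-dimensional analytically unramified hypothesis, while the present proposition assumes only that $R$ is reduced local Cohen--Macaulay; the identity $\tr_R(I)=(R:I)I$ for a fractional ideal containing a nonzerodivisor does hold in this generality (and the paper's own proof uses it implicitly), but it should be cited at that level rather than through the one-dimensional statement.
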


\begin{proof}
    Let $\cC_R(S)=R:S=S^*$ denote the conductor ideal of $S$ in $R$. Since it is the largest common ideal between $R$ and $S$, notice that $\tr_R(S)=S^*S=\cC_R(S)=S^*$. This shows that $\tr_R(S)=S^*$. Now applying $\tr_R(\cdot)$ again to this equality and using \Cref{lem:dualreflexive}, we get that $\tr_R(S)=\tr_R(\tr_R(S))=\tr_R(S^*)=\tr_R(S^{**})$ which finishes the proof.
\end{proof}

\begin{cor}\label{cor:dualtrace}
Let $M$ be a finitely generated module over a reduced local Cohen-Macaulay ring such that $\tr_R(M)$ is a regular ideal. Then $\tr_R(M)^{**}=\tr_R(\End_R(\tr_R(M)))$. 
\end{cor}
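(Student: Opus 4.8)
The plan is to reduce the statement to Proposition \ref{prop:birexttrace} by exploiting the standard identification between trace ideals of a ring and endomorphism rings of ideals. Let $L = \tr_R(M)$, which by hypothesis is a regular ideal; note that $L$ is itself a trace ideal (being a trace ideal, $L = \tr_R(L)$ by \Cref{prelimprop:traceproperties}(4), since trace ideals are those regular ideals equal to their own trace). The first step is to recall that for a regular ideal $L$, the endomorphism ring $\End_R(L)$ is naturally identified with the fractional ideal $L : L = \{\alpha \in Q \mid \alpha L \subseteq L\}$, which is a ring sitting between $R$ and $\overline{R}$ — that is, a finite birational extension $S := \End_R(L)$ of $R$. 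Finiteness is automatic here because $\overline{R}$ is module-finite over $R$ (the ambient hypotheses of this section are inherited from the analytically unramified / reduced Cohen–Macaulay setup, and $S \subseteq \overline{R}$), so $S$ is a finite birational extension of $R$ in the sense of \Cref{prop:birexttrace}.

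The second step is to invoke \Cref{prop:birexttrace} with this $S$: it gives $\tr_R(S) = \tr_R(S^*) = \tr_R(S^{**}) = S^*$, where $S^* = R : S$ is the conductor $\cC_R(S)$. So it remains to connect $S^* = R : \End_R(L)$ back to $L^{**} = R : (R : L)$. The key point is the chain of equalities $R : \End_R(L) = R : (L : L)$, and the identity $(L:L) = L : L$... more precisely one uses that for a trace ideal $L$ one has $L : L = R : L$ — this is the crucial structural fact about trace ideals (a regular ideal $L$ is a trace ideal precisely when $L : L = R : L$, equivalently when $(R:L)L = L$, which is \Cref{prelimprop:traceproperties}(3),(4)). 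Granting $L : L = R : L$, we get $S^* = R : \End_R(L) = R : (L:L) = R : (R:L) = L^{**}$, and therefore $L^{**} = \tr_R(S) = \tr_R(\End_R(L))$, which is exactly the claimed equality $\tr_R(M)^{**} = \tr_R(\End_R(\tr_R(M)))$.

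I expect the main obstacle to be the careful justification of the identity $L : L = R : L$ for a trace ideal $L$, and making sure all the fractional-ideal manipulations ($\End_R(L) \cong L : L$, the behaviour of colons under the identification) are valid — these rest on \Cref{prelimprop:refprop}(2) and the characterization of trace ideals, and one must check the regularity hypothesis is preserved so that those results apply. A secondary point to verify is that $S = \End_R(L)$ is genuinely a \emph{finite} birational extension so that \Cref{prop:birexttrace} is applicable; this follows since $R \subseteq S = (L:L) \subseteq \overline{R}$ and $\overline{R}$ is module-finite over $R$ in our setting, forcing $S$ to be module-finite as well. Once these bookkeeping steps are in place, the corollary drops out of \Cref{prop:birexttrace} with essentially no further computation.
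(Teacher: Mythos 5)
Your proposal is correct and follows essentially the same route as the paper: identify $S=\End_R(\tr_R(M))=\tr_R(M):\tr_R(M)=R:\tr_R(M)$ (using that trace ideals satisfy $(R:L)L=L$, i.e.\ $L:L=R:L$), and then read off $\tr_R(M)^{**}=S^*=\tr_R(S)$ from \Cref{prop:birexttrace}. One small point: module-finiteness of $S$ should not be deduced from $S\subseteq\overline{R}$ with $\overline{R}$ module-finite (that is not guaranteed by the reduced Cohen--Macaulay hypothesis alone); it is immediate instead because $S\cong\Hom_R(L,L)$ is a finitely generated module over the Noetherian ring $R$.
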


\begin{proof}
    Let $J=\tr_R(M)$ and consider the birational extension $S=\End_R(J)$. Then $J^*=R:J=J:J=S$ since $J$ is a trace ideal \cite[Proposition 2.4]{kobayashi2019rings}. Thus $J^{**}=S^*=\tr_R(S)$ by \Cref{prop:birexttrace}, thereby finishing the proof.
\end{proof}

\begin{cor}\label{misc:doubledualtrace}
     Let $R$ be a reduced local Cohen-Macaulay ring. Then the reflexive hull of a regular trace ideal is again a trace ideal. 
\end{cor}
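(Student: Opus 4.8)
The plan is to read this statement off \Cref{cor:dualtrace}. First I would observe that for a regular ideal $J$ of $R$ the natural map $J \to J^{**}$ is injective, since the inclusion $J \hookrightarrow R$ is an element of $J^* = R:J$ that detects every nonzero element of $J$; moreover $J^{**}$ is reflexive in the present setting (this is the reduced Cohen--Macaulay instance of \Cref{prelimprop:refprop}(1); see also \Cref{lem:dualreflexive}). Hence $J^{**}$ is precisely the reflexive hull of $J$, and the assertion reduces to showing that $J^{**}$ is a trace ideal whenever $J$ is a regular trace ideal of $R$.

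Next I would use that a regular trace ideal satisfies $\tr_R(J) = J$ by \Cref{prelimprop:traceproperties}(4), and then apply \Cref{cor:dualtrace} to the module $M = J$ — which is legitimate because its trace ideal $\tr_R(J) = J$ is regular by hypothesis. This gives
\[
J^{**} \;=\; \tr_R(J)^{**} \;=\; \tr_R\!\bigl(\End_R(\tr_R(J))\bigr) \;=\; \tr_R\!\bigl(\End_R(J)\bigr).
\]
Since $R$ is Noetherian and $J$ is finitely generated, $\End_R(J) = \Hom_R(J,J)$ is a finitely generated $R$-module — indeed $\End_R(J) = J:J = R:J = J^*$ because $J$ is a trace ideal, by \cite[Proposition 2.4]{kobayashi2019rings} — so $\tr_R(\End_R(J))$ is by definition a trace ideal of $R$. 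Therefore $J^{**}$ is a trace ideal, as claimed.

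There is no real obstacle here: the substantive content has already been packaged into \Cref{cor:dualtrace}, which itself rests on \Cref{prop:birexttrace} and the identification $J^* = \End_R(J)$ for a trace ideal $J$. The only points that will need a sentence of justification are verifying that the regularity hypothesis of \Cref{cor:dualtrace} is satisfied (automatic, since $\tr_R(J) = J$) and the remark, noted above, that $J^{**}$ genuinely is the reflexive hull of $J$ at this level of generality.
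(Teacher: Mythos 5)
Your argument is correct and is essentially the paper's proof: the paper also deduces the corollary directly from \Cref{cor:dualtrace}, applied to $M=J$ with $\tr_R(J)=J$ regular, so that $J^{**}=\tr_R(\End_R(J))$ is visibly a trace ideal. Your extra sentences checking that $J^{**}$ really is the reflexive hull and that the regularity hypothesis of \Cref{cor:dualtrace} is automatic are harmless elaborations of what the paper leaves implicit.
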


\begin{proof}
    The proof is immediate from \Cref{cor:dualtrace}.
\end{proof}

In the following, we denote the center of the endomorphism ring of a module $M$, by $Z(\End_R(M))$.

\begin{prop}\label{appprop:traceandcenter}
    Let $R$ be a reduced Cohen-Macaulay local ring and let $M$ be a finitely generated reflexive $R$-module with regular trace ideal. Then $\tr_R(M)$ is reflexive if and only if $\tr_R(M)=\tr_R(Z(\End_R(M)))$. 
\end{prop}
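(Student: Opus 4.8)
The plan is to unwind the statement through the identifications already established for trace ideals of reflexive modules. Write $J = \tr_R(M)$; by hypothesis $J$ is a regular trace ideal, so by \Cref{prelimprop:traceproperties}(2) and the reflexivity of $M$ we have $\tr_R(M) = \tr_R(M^*) = J$. The first step is to pin down $\End_R(M)$ and its center. Since $M$ is reflexive with $\tr_R(M) = J$, one expects (this is the folklore identification used in \cite{lindo2017trace}, \cite{goto2020correspondence}) that $\tr_R(\End_R(M))$ equals $\tr_R(M)^{**} = J^{**}$; indeed $\End_R(M)$ maps onto $J$ up to the relevant identifications, and more precisely one wants $\tr_R(Z(\End_R(M))) = J^{**}$ as well, using that for a regular trace ideal $J$ the center of $\End_R(J) = J:J = J^*$ is again $J^*$ (so $\tr_R(J^*) = J^{**}$ by \Cref{cor:dualtrace}) and that passing from $M$ to $J = \tr_R(M)$ does not change the relevant birational extension / center. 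So the crux is the identity
\begin{equation}\label{eq:centertrace}
\tr_R\bigl(Z(\End_R(M))\bigr) = J^{**}.
\end{equation}

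Granting \eqref{eq:centertrace}, the proposition is a one-line consequence: $J = \tr_R(M)$ is reflexive if and only if the natural inclusion $J \hookrightarrow J^{**}$ is an equality, and by \eqref{eq:centertrace} that says exactly $\tr_R(M) = \tr_R(Z(\End_R(M)))$. So the whole proof reduces to proving \eqref{eq:centertrace}. For the $\supseteq$ and computational backbone I would argue as follows: $Z(\End_R(M))$ is a commutative birational extension $R \subseteq S \subseteq Q$ (reduced hypothesis guarantees $\End_R(M) \subseteq \End_R(M) \otimes_R Q = \prod Q$, so its center sits inside $Q$), and by \Cref{prop:birexttrace} applied to this finite birational extension $S = Z(\End_R(M))$ we get $\tr_R(S) = S^* = \cC_R(S)$. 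Then it remains to identify $\cC_R(S)$ with $J^{**}$. Here I would use that $J = \tr_R(M) = \tr_R(M^{**})$ is a trace ideal of a reflexive module, so $\End_R(J) = J:J = R:J = J^*$ (as in the proof of \Cref{cor:dualtrace}), hence $J^* $ is a ring, its center is itself, $Z(\End_R(M))$ should coincide with $Z(\End_R(J))$ via the standard fact that $\End_R$ of a module and of its trace ideal share the same center (both equal to the ``multiplier ring'' $J:J$ intersected with $Q$), giving $Z(\End_R(M)) = J^* $ and therefore $\tr_R(Z(\End_R(M))) = \tr_R(J^*) = J^{**}$ by \Cref{cor:dualtrace}.

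The step I expect to be the main obstacle is the claim $Z(\End_R(M)) = Z(\End_R(\tr_R(M)))$, i.e.\ that replacing $M$ by its trace ideal does not alter the center of the endomorphism ring. This is the place where one genuinely uses reflexivity of $M$ and the structure theory of \cite{lindo2017trace}: the idea is that $\End_R(M)$ always contains, and is a module over, the commutative ring $J:J$ where $J = \tr_R(M)$, that the center of $\End_R(M)$ is contained in $J:J$ (an element central in $\End_R(M)$ commutes in particular with all the maps $M \to J \hookrightarrow M$, forcing it to multiply $J$ into $J$), and conversely every element of $J:J$ acts centrally on $M$ because it commutes with scalar multiplications which already generate enough of $\End_R(M)$ after localizing; one needs reflexivity to promote this from a generic equality to an honest one. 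Once this is in hand, the rest is bookkeeping with the already-proved \Cref{prop:birexttrace} and \Cref{cor:dualtrace}. If the clean identity $Z(\End_R(M)) = J:J$ turns out to need extra hypotheses, the fallback is to prove only the two inclusions needed for the ``if and only if'': $\tr_R(M) \subseteq \tr_R(Z(\End_R(M))) \subseteq \tr_R(M)^{**}$, which already suffices since reflexivity of $\tr_R(M)$ is equivalent to the outer terms coinciding.
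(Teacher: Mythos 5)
Your overall reduction is the same as the paper's: everything hinges on the identity $\tr_R(Z(\End_R(M)))=\tr_R(M)^{**}$, which the paper obtains in two lines by quoting \cite[Corollary 3.11]{lindo2017trace} (namely $\tr_R(\End_R(\tr_R(M)))=\tr_R(Z(\End_R(M)))$) together with \Cref{cor:dualtrace}. The gap is that you never actually establish this identity: the containment $\End_R(\tr_R(M))=\tr_R(M):\tr_R(M)\subseteq Z(\End_R(M))$, which is exactly where reflexivity of $M$ enters and is the substance of Lindo's result, is only asserted (``one needs reflexivity to promote this from a generic equality to an honest one''). Worse, the one concrete argument you do offer is not valid: there is no natural composite ``$M\to J\hookrightarrow M$'' because the trace ideal $J=\tr_R(M)$ is not a submodule of $M$ in general. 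What is true, and what you would need to write out, is: (i) a central element of $\End_R(M)$ is multiplication by some $q\in Q$ (your embedding of $Z(\End_R(M))$ into $Q$, using that $M$ is torsion-free with regular trace, is fine), and then $qf(m)=f(qm)\in R$ for all $f\in M^*$, $m\in M$ gives $q\in R:J$, which equals $J:J$ because $J$ is a regular trace ideal; this yields $J\subseteq\tr_R(Z(\End_R(M)))$. (ii) Conversely, for $q\in J:J$ reflexivity is used via $M\cong M^{**}=\Hom_R(M^*,R)$: the functional $f\mapsto qf(m)$ takes values in $qJ\subseteq J\subseteq R$, hence defines an endomorphism of $M^{**}\cong M$, and it is central because it becomes scalar multiplication after tensoring with $Q$ and $\End_R(M)$ is torsion-free; this gives $R:J\subseteq Z(\End_R(M))$ and hence $\tr_R(Z(\End_R(M)))=R:Z(\End_R(M))\subseteq R:(R:J)=J^{**}$. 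None of this is carried out in your proposal.

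Your fallback is also insufficient as stated. From the chain $\tr_R(M)\subseteq\tr_R(Z(\End_R(M)))\subseteq\tr_R(M)^{**}$ you do get the ``only if'' direction, but equality of the two left-hand terms says nothing about the right-hand term, so the ``if'' direction does not follow from the chain alone. It can be repaired within your own setup: by \Cref{prop:birexttrace}, $\tr_R(Z(\End_R(M)))=S^*$ for the finite birational extension $S=Z(\End_R(M))$, and duals of finitely generated modules are reflexive over a reduced Cohen--Macaulay local ring (as invoked in \Cref{lem:dualreflexive}), so if $\tr_R(M)$ equals this dual it is reflexive --- but you did not make that argument. In short: either cite \cite[Corollary 3.11]{lindo2017trace} as the paper does, or prove the identification $Z(\End_R(M))=\End_R(\tr_R(M))$ along the lines sketched above; as written, the central step is missing and the supporting argument given for it is incorrect.
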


\begin{proof}
    By \cite[Corollary 3.11]{lindo2017trace}, we know that $\tr_R(\End_R(\tr_R(M)))=\tr_R(Z(\End_R(M)))$. The result now follows immediately from \Cref{cor:dualtrace}.
\end{proof}

\begin{cor}
    Let $R$ be a reduced local Cohen-Macaulay ring. If $I$ is a reflexive ideal of $R$, then $\tr_R(I)$ is reflexive if and only if $\tr_R(I)=\tr_R(\End_R(I)).$
\end{cor}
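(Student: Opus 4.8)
The plan is to read the Corollary off Proposition~\ref{appprop:traceandcenter} applied to $M=I$, the only additional input being that for an ideal $I$ the endomorphism ring $\End_R(I)$ coincides with its own center.

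First I would note that we may assume $\tr_R(I)$ is a regular ideal: if $I$ itself is a regular ideal then $I\subseteq\tr_R(I)$ by \Cref{prelimprop:traceproperties}(4), so $\tr_R(I)$ is regular, while the case in which $I$ (hence $\tr_R(I)$) contains no non-zero-divisor is degenerate — in keeping with the rest of the paper, the Corollary should be understood for \emph{regular} reflexive ideals. With this in hand, $I$ is a finitely generated reflexive $R$-module with regular trace ideal, so \Cref{appprop:traceandcenter} applies directly and gives: $\tr_R(I)$ is reflexive if and only if $\tr_R(I)=\tr_R(Z(\End_R(I)))$.

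The remaining step is to identify $\tr_R(Z(\End_R(I)))$ with $\tr_R(\End_R(I))$, for which it suffices to show $Z(\End_R(I))=\End_R(I)$, i.e.\ that $\End_R(I)$ is commutative. Since $I$ contains a regular element $x$, \Cref{prelimprop:refprop}(2) (with $J=I$) gives $\End_R(I)=\Hom_R(I,I)\cong I:I$ via $\varphi\mapsto\varphi(x)/x$, and one checks at once that this map is multiplicative, hence an isomorphism of $R$-algebras. But $I:I=\{\alpha\in Q:\alpha I\subseteq I\}$ is a subring of the total ring of fractions $Q$, which is commutative, so $I:I$ — and therefore $\End_R(I)$ — is commutative and equals its own center. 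Since $\tr_R$ is invariant under isomorphism, $\tr_R(Z(\End_R(I)))=\tr_R(\End_R(I))$, and combining with the previous paragraph completes the proof.

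I do not anticipate a genuine obstacle: once the center of $\End_R(I)$ is seen to be everything, the Corollary is a restatement of \Cref{appprop:traceandcenter}. Alternatively, one can avoid centers altogether: for reflexive $I$, \Cref{prelimprop:traceproperties}(3) gives $R:\tr_R(I)=R:(R:I)I=\{\alpha\in Q:\alpha I\subseteq R:(R:I)\}=\{\alpha\in Q:\alpha I\subseteq I\}=\End_R(I)$, while for the trace ideal $J=\tr_R(I)$ one has $\End_R(J)\cong J:J=R:J$; hence $\tr_R(\End_R(\tr_R(I)))=\tr_R(\End_R(I))$, and \Cref{cor:dualtrace} yields $\tr_R(I)^{**}=\tr_R(\End_R(I))$, so $\tr_R(I)$ is reflexive precisely when $\tr_R(I)=\tr_R(\End_R(I))$. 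The one thing to handle carefully in either route is the regularity hypothesis built into \Cref{appprop:traceandcenter} and \Cref{prelimprop:refprop}(2).
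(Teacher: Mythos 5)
Your proof is correct and follows the paper's own route: apply Proposition~\ref{appprop:traceandcenter} to $M=I$ after observing that $\End_R(I)\cong I:I$ is a commutative subring of $Q$, hence equal to its center. Your extra care about the implicit regularity hypothesis (needed for $\End_R(I)\cong I:I$ and for Proposition~\ref{appprop:traceandcenter}) is a reasonable point the paper leaves tacit, but the argument is essentially the same.
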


\begin{proof}
    Since $\End_R(I)=I:I$ is a commutative ring, we get that $Z(\End_R(I))=\End_R(I)$. The proof now follows immediately from \Cref{appprop:traceandcenter}. 
\end{proof}

\bibliography{references}
\bibliographystyle{alpha}

\end{document}